\newtheorem{thm}{Theorem}[section]
\newtheorem{lem}[thm]{Lemma}
\newtheorem{cor}[thm]{Corollary}
\newtheorem{prop}[thm]{Proposition}
\newtheorem{defn}[thm]{Definition}
\theoremstyle{definition}
\newtheorem{rem}[thm]{Remark}
\newcommand{\cT}{\mathcal{T}}
\newcommand{\cF}{\mathcal{F}}
\newcommand{\cG}{\mathcal{G}}
\newcommand{\wt}{\widetilde}
\newcommand{\La}{\Lambda}
\newcommand{\bi}{\bar{i}}
\newcommand{\bistar}{\bar{i^*}}
\newcommand{\ve}{\varepsilon}
\newcommand{\Hom}{\operatorname{Hom}}
\newcommand{\End}{\operatorname{End}}
\newcommand{\GL}{\operatorname{GL}}
\newcommand{\add}{\operatorname{add}}
\newcommand{\soc}{\operatorname{soc}}
\newcommand{\modcat}{\operatorname{mod}}
\newcommand{\modtop}{\operatorname{top}}
\begin{document}
\baselineskip=15pt

\title[Torsion pairs and Ringel duality for Schur algebras]{Torsion pairs and Ringel duality for Schur algebras}


\author{Karin Erdmann}
\address[K. Erdmann]{Mathematical Institute, University of Oxford, Radcliffe Observatory Quarter, 
Oxford OX2 6GG, UK}
\email{erdmann@maths.ox.ac.uk}

\author{Stacey Law}
\address[S. Law]{Department of Pure Mathematics and Mathematical Statistics, University of Cambridge, Cambridge CB3 0WB, UK}
\email{swcl2@cam.ac.uk}

\begin{abstract}
	Let $A$ be a finite-dimensional algebra over a field of characteristic $p>0$. We use a functorial approach involving torsion pairs to construct embeddings of endomorphism algebras of basic projective $A$--modules $P$ into those of the torsion submodules of $P$. As an application, we show that blocks of both the classical and quantum Schur algebras $S(2,r)$ and $S_q(2,r)$ are Morita equivalent as quasi-hereditary algebras to their Ringel duals if they contain $2p^k$ simple modules for some $k$.
\end{abstract}

\keywords{}

\subjclass[2020]{16D90, 16G10, 20G42, 16T20}

\maketitle

\section{Introduction}

The classical Schur algebras $S(n, r)$ and their $q$-analogues $S_q(n,r)$ are finite-dimensional algebras which arise naturally in algebraic Lie theory.
Viewing them as quasi-hereditary algebras allows tools from the representation theory of finite-dimensional algebras to be used in their study, which has led to new insight, including the discovery of tilting modules in algebraic Lie theory.

The endomorphism algebra of a full tilting module $T$ of a quasi-hereditary algebra is again quasi-hereditary. It is called a Ringel dual of the quasi-hereditary algebra, and is unique up to Morita equivalence (see \cite{R2}).
In \cite{EH}, the main result classifies when a classical Schur algebra $S(2,r)$ is Morita equivalent to its Ringel dual, as an algebra.

In this paper, we study blocks of both classical and quantum Schur algebras from the perspective of torsion pairs. 
More generally, for any finite-dimensional algebra $A$, we show that under suitable conditions the endomorphism algebra of a basic projective $A$--module $P$ embeds into that of the torsion submodule of $P$.
We apply this result to show that a block of $S(2,r)$ or $S_q(2,r)$ with $2p^k$ simple modules is Ringel self-dual. This proof has the advantage of being functorial.

Along the way, we review and extend symmetry properties for decomposition numbers for Schur algebras, as well as multiplicities of standard modules in tilting modules, providing block versions of these results.
A recent theorem by Coulembier \cite{Coul} shows that for a quasi-hereditary algebra with a duality fixing simple modules, there is essentially a unique quasi-hereditary structure. With this, we deduce that the blocks with $2p^k$ simple modules are in fact Ringel self-dual as \emph{quasi-hereditary} algebras. 

The paper is organized as follows: we start with torsion pairs and our result on embeddings of endomorphism algebras described above (Theorem~\ref{thm:2.4}). We then recall preliminary material on quasi-hereditary algebras, and in Section 3 we discuss blocks of $S_q(2,r)$ and $S(2,r)$. In Section 4 we prove a number of combinatorial identities on decomposition numbers and multiplicities for tilting modules, exhibiting symmetry properties in the decomposition matrix and the corresponding matrix for tilting modules. In Section 5 we apply the embedding to give a functorial proof that blocks of Schur algebras with $2p^k$ simple modules are Ringel self-dual as quasi-hereditary algebras.

\bigskip

\subsection*{Acknowledgements}
The second author was supported by a London Mathematical Society Early Career Fellowship at the University of Oxford.

\bigskip

\section{Preliminaries}

Throughout, let $K$ be an algebraically closed field of characteristic $p>0$. 
Assume $A$ is a finite-dimensional $K$--algebra, and denote by $A$-$\modcat$ the category of finite-dimensional left $A$--modules. 

\subsection{Torsion pairs}\label{sec:torsion-prelim}

We follow the notation of \cite[Chapter 4]{ASS}, and refer the reader also to \cite[\textsection 4.1]{R} for further detail.
Recall that  that a module $M$ is \emph{generated} by a module $G$ if it is isomorphic to a quotient of a direct sum of copies of $G$. 

\begin{defn}
	A \emph{torsion pair} is a pair $(\cG,\cF)$ of full subcategories of  $A$-$\modcat$ such that
	\[ \cF = \{M\in A\text{-}\modcat \mid \Hom(\cG,M) = 0 \}\quad\text{and}\quad\cG = \{M\in A\text{-}\modcat \mid \Hom(M,\cF)=0 \}. \]
\end{defn}

Modules in $\cG$ (sometimes also $\cT$ in the literature) and $\cF$ are often referred to as \textit{torsion} and \textit{torsion-free} modules respectively.

\begin{prop}[{\cite[\textsection 4]{R}}]\label{prop:torsion-pair} 
	Let $G$ be a fixed projective $A$--module. Then $(\cG,\cF)$ is a torsion pair, where
	\begin{itemize}
		\item $\cF:=\{ M \mid \Hom(G,M)=0\}$, and
		\item $\cG:=\{M \mid M\ \text{is generated by}\ G \} = \{M \mid \modtop M \in\add(\modtop G) \}$.
	\end{itemize}
\end{prop}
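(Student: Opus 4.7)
The plan is to verify two separate assertions: first, that the two given descriptions of $\cG$ coincide, and second, that $(\cG,\cF)$ satisfies the mutual orthogonality conditions defining a torsion pair. For the equivalence of descriptions, the forward direction is immediate: if $M$ is a quotient of $G^n$, then $\modtop M$ is a quotient of the semisimple module $(\modtop G)^n$, hence a summand, so $\modtop M \in \add(\modtop G)$. For the reverse direction, given $\modtop M \in \add(\modtop G)$, I would pick $n$ large enough that $\modtop M$ is a direct summand of $(\modtop G)^n$, producing a surjection $G^n \twoheadrightarrow \modtop M$ via the canonical quotient followed by the projection onto $\modtop M$. Lifting along $M \twoheadrightarrow \modtop M$ using projectivity of $G^n$, the resulting map $G^n \to M$ is surjective on tops, hence surjective by Nakayama's lemma.

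For the orthogonality condition $\cF = \{M \mid \Hom(\cG, M) = 0\}$, one inclusion is trivial since $G \in \cG$. The other follows because any $N \in \cG$ admits a surjection $\pi : G^n \twoheadrightarrow N$, so any $f : N \to M$ with $M \in \cF$ satisfies $f \circ \pi = 0$ and hence $f = 0$. The inclusion $\cG \subseteq \{M \mid \Hom(M, \cF) = 0\}$ is then immediate. For its reverse, given $M$ with $\Hom(M, \cF) = 0$, I would define $M'$ as the image of the natural evaluation map $\bigoplus_{f \in \Hom(G, M)} G \to M$, so that $M'$ is the largest submodule of $M$ generated by $G$; by construction $M' \in \cG$. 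Projectivity of $G$ then guarantees that every morphism $G \to M/M'$ lifts through $M \to M/M'$ to a map $G \to M$, whose image by construction lies in $M'$, so the original map is zero. Hence $\Hom(G, M/M') = 0$, i.e.\ $M/M' \in \cF$, and the hypothesis $\Hom(M, M/M') = 0$ forces the natural projection $M \to M/M'$ to vanish, giving $M = M' \in \cG$.

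The main obstacle is the reverse inclusion in this second orthogonality check: constructing the maximal $G$-generated submodule $M'$ and verifying that $M/M'$ is torsion-free. This step is where projectivity of $G$ is essential — it is precisely what allows morphisms out of $G$ into $M/M'$ to be lifted to $M$ — and is the conceptual heart of the statement.
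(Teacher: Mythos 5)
Your proof is correct and carries out the standard argument. The paper itself gives no proof here — it cites \cite[\textsection 4]{R} — and your argument is exactly what the reference supplies: the two descriptions of $\cG$ are reconciled via a lift-and-Nakayama argument using projectivity of $G$, and the orthogonality conditions are verified by constructing the trace submodule $M' = \sum_{f \in \Hom(G,M)} \operatorname{im} f$ and using projectivity of $G$ to show $M/M'$ is torsion-free. One small point worth making explicit: since $M$ is finite-dimensional, the trace $M'$ is already the image of a finite sum $G^n \to M$, so $M'$ genuinely lies in $\cG$ as defined (as a quotient of a finite direct sum of copies of $G$).
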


Note here ${\rm add}(V)$ denotes the full subcategory of $A$-$\modcat$ consisting of modules which are direct sums of direct summands of $V$. 

 
\bigskip

\subsection{The algebra map}\label{sec:torsion pair}

Let $G$ be a (basic) projective $A$--module, so $G=Ae$ for some idempotent $e$ of $A$. Defining module categories $(\cG,\cF)$ as in Proposition~\ref{prop:torsion-pair} gives a torsion pair. Furthermore, we have the additive functor 
\[ t:A\text{-}\modcat\longrightarrow A\text{-}\modcat,\qquad M\longmapsto AeM, \]
that is, $t(M)=AeM$ is the largest submodule of $M$ generated by $G$.
For a homomorphism $\theta:M\to N$, the map $t(\theta)$ is the restriction $\theta|_{t(M)}:t(M)\to t(N)$. The following says that any module can be written uniquely as the extension of a torsion-free module by a torsion module.

\begin{prop}[{\cite[VI Proposition 1.5]{ASS}}] \label{prop:ses}
For any $A$--module $M$, there is a short exact sequence
\begin{equation}\label{eqn:t}
0\to t(M) \to M \to \wt{M} \to 0
\end{equation}
where $t(M)\in\cG$ is the largest submodule of $M$ generated by $G$, and $\wt{M}\cong M/t(M)\in\cF$. 
It is unique up to isomorphism of short exact sequences. 
\end{prop}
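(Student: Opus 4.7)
The plan is to exhibit the short exact sequence directly with $t(M) = AeM$, verify the two required properties (torsion and torsion-free), and then deduce uniqueness from the orthogonality relations built into a torsion pair. Throughout I will use the standard bijection $\Hom_A(Ae, M) \cong eM$, given by $\phi \mapsto \phi(e)$, which holds because $e$ is idempotent and any $A$-linear $\phi$ satisfies $\phi(e) = \phi(e^2) = e\phi(e) \in eM$.

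First I would show that $t(M) := AeM$ is indeed a submodule of $M$ generated by $G$. For each $m \in eM$ the assignment $ae \mapsto am$ is a well-defined $A$-linear map $Ae \to M$ (well-defined because $m = em$ forces $ae \cdot m$ to depend only on $ae$), and the images of all such maps collectively span $AeM$. Hence $t(M) \in \cG$. For maximality, any submodule $N \subseteq M$ generated by $G$ is a sum of images of maps $\phi : Ae \to M$; by the bijection above, each such image lies in $A \phi(e) \subseteq AeM$, so $N \subseteq t(M)$.

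Next I would verify that $\wt{M} := M/t(M)$ lies in $\cF$. Let $\pi : M \to M/t(M)$ denote the projection. For any $\psi : Ae \to M/t(M)$, the projectivity of $G = Ae$ yields a lift $\phi : Ae \to M$ with $\pi \circ \phi = \psi$; but then $\phi(G) \subseteq t(M) = \ker \pi$, so $\psi = 0$. Hence $\Hom(G, \wt{M}) = 0$, i.e.\ $\wt{M} \in \cF$.

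For uniqueness, suppose $0 \to T \to M \to F \to 0$ is any short exact sequence with $T \in \cG$ and $F \in \cF$. Maximality of $t(M)$ forces $T \subseteq t(M)$, while the inclusion $t(M) \hookrightarrow M$ composed with $M \to F$ is a map $\cG \to \cF$ and hence zero, giving $t(M) \subseteq T$. Thus $T = t(M)$ and the sequences agree. I do not anticipate any serious obstacle here; the only point requiring mild care is the verification that $AeM$ really equals the trace of $G$ in $M$, which rests on the $\Hom(Ae,-) \cong e(-)$ identification above.
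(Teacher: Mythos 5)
Your proof is correct and is the standard argument: the paper itself does not reproduce a proof but simply cites \cite[VI Proposition 1.5]{ASS}, and your argument (identify $t(M)=AeM$ with the trace of $G=Ae$ in $M$ via $\Hom_A(Ae,-)\cong e(-)$, verify $t(M)\in\cG$ and $M/t(M)\in\cF$ using projectivity of $Ae$, and get uniqueness from the orthogonality $\Hom(\cG,\cF)=0$) is exactly the expected one for this concrete torsion pair.
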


Let $\{L(\lambda)\mid \lambda\in\Lambda\}$ denote a complete set of pairwise non-isomorphic simple $A$--modules, with corresponding indecomposable projective modules $P(\lambda)$. When clear from context, we write $L\lambda$ and $P\lambda$ for $L(\lambda)$ and $P(\lambda)$, and similarly omit other parentheses, to ease the notation. 

\medskip

\begin{thm}\label{thm:2.4}
	Let $e$ and $t$ be defined as above. Suppose $\Lambda'$ is a subset of $\La$. Assume that for all $\lambda\in\Lambda'$ we have $\soc P(\lambda)= Ae\soc P(\lambda)$.
	Then $t:\End_A(P)\to \End_A(t(P))$ is an injective algebra homomorphism, where	$P=\bigoplus_{\lambda\in\Lambda'}P(\lambda)$.
\end{thm}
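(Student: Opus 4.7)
The proof breaks into two parts: the algebra homomorphism property and injectivity. For the first, I would observe that $t$ being an additive functor forces $t\colon \End_A(P) \to \End_A(t(P))$ to be a $K$-algebra homomorphism. Explicitly, any $\psi \in \End_A(P)$ satisfies $\psi(AeP) \subseteq Ae\psi(P) \subseteq AeP$, so $\psi$ restricts to an endomorphism of $t(P)$, and standard functoriality gives $t(\theta \circ \psi) = t(\theta) \circ t(\psi)$ and $t(\operatorname{id}_P) = \operatorname{id}_{t(P)}$.

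The substantive content is injectivity. Suppose $\theta \in \End_A(P)$ with $t(\theta) = 0$, meaning $\theta$ vanishes on $t(P)$, so that $\theta$ factors through $P/t(P) = \wt{P}$. By Proposition~\ref{prop:ses}, $\wt{P}\in\cF$. I would then use two closure properties of $\cF$: it is closed under submodules (immediate from $\cF = \{M : \Hom(G,M)=0\}$) and under quotients (using that $G$ is projective, any nonzero map $G \to N$ would lift through $M \twoheadrightarrow N$ to a nonzero map into $M$). These imply that $\operatorname{im}\theta \in \cF$, and consequently every simple submodule $S$ of $\operatorname{im}\theta$ lies in $\cF$, so $eS = 0$.

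To derive a contradiction from the assumption $\theta \neq 0$, I would deploy the socle hypothesis. Any simple submodule $S \subseteq \operatorname{im}\theta$ sits inside $\soc P = \bigoplus_{\lambda \in \Lambda'} \soc P(\lambda)$, and the hypothesis $\soc P(\lambda) = Ae\,\soc P(\lambda)$ upgrades to $\soc P = Ae\,\soc P$. Since $\soc P$ is semisimple, $S$ is a direct summand; writing $\soc P = S \oplus S'$ and applying $Ae$ gives $\soc P = AeS \oplus AeS'$ with $AeS \subseteq S$ and $AeS' \subseteq S'$, which forces $AeS = S$ and hence $eS \neq 0$, contradicting $eS=0$. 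The main obstacle — once the formal factorisation is in place — is recognising that the torsion-free property of $\operatorname{im}\theta$ interacts with the semisimple decomposition of $\soc P$ to rule out any simple submodule of the image, which is precisely what the socle hypothesis provides.
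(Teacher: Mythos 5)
Your proof is correct and takes a mildly different route from the paper's. The paper argues directly: given a nonzero $\theta\colon P(\lambda)\to P(\mu)$, it picks a simple $S\subseteq \operatorname{im}\theta\cap\soc P(\mu)$, uses the socle hypothesis to get $S=AeS$ and hence a nonzero $ew\in S$ with $ew=\theta(v)$, and then exhibits the explicit element $ev\in AeP(\lambda)=t(P\lambda)$ with $\theta(ev)=ew\neq 0$. You instead argue by contradiction, observing that $t(\theta)=0$ forces $\theta$ to factor through $\wt{P}\in\cF$, and then invoke two closure properties of $\cF$ — closure under submodules (immediate from the definition) and closure under quotients (which holds here because $G$ is projective, so nonzero maps $G\to N$ lift along surjections) — to conclude $\operatorname{im}\theta\in\cF$, hence every simple $S\subseteq\operatorname{im}\theta$ has $eS=0$, contradicting the socle hypothesis exactly as the paper derives $eS\neq 0$. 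Your approach is slightly more conceptual: it places the argument squarely inside the torsion-pair formalism and isolates the fact (worth noting) that projectivity of $G$ makes $\cF$ closed under quotients, whereas the paper's is shorter and hands-on, producing an explicit witness $ev\in t(P\lambda)$ not killed by $\theta$. Both use the same two essential ingredients — a simple submodule of $\operatorname{im}\theta$ and the socle hypothesis — so the difference is one of packaging rather than substance.
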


Note that the assumption on the socle implies that $t(P\lambda)$ is non-zero. 
Given an arbitrary choice of $G=Ae$, one may take $\Lambda'$ to be the largest subset of $\La$ such that the socle condition holds.
Alternatively,  we may start with a set $\Lambda'\subset \La$ and then take $e$ to be a (smallest) basic idempotent such that
the socle condition holds.

\begin{proof}[Proof of Theorem~\ref{thm:2.4}]
	That the map is an algebra homomorphism is clear. Let $\lambda,\mu\in\Lambda'$. We show that each component
	\begin{equation}\label{eqn:component}
	t_{\lambda\mu}:\Hom_A(P\lambda,P\mu)\to \Hom_A\big( t(P\lambda), t(P\mu) \big)
	\end{equation}
	is injective. That is, for $0\ne\theta\in\Hom_A(P\lambda,P\mu)$, we show that $\theta|_{t(P\lambda)}\ne 0$.
	
 Since $\theta$ is non-zero, there is a simple submodule $S\subseteq  \soc P(\mu)$ such that  $S\subseteq \operatorname{im}\theta$ (note that the socle is semisimple). By the assumption,
 $S= AeS$ and $eS\subseteq S$ is non-zero. Hence there is some $0\neq w' \in \operatorname{im} \theta$ with $w'=ew$ for some $w\in S$. 
 Say $ew=\theta(v)$ for some $v\in P\lambda$. Then $0\ne ew = e^2w = e(\theta(v))=\theta(ev)$, where $ev\in AeP\lambda=t(P\lambda)$.
\end{proof}

We focus on the case $\Lambda'=\Lambda$. If we choose $G$ to be the full basic projective module $\bigoplus_{\lambda\in\Lambda}P(\lambda)$ then the above theorem simply gives the identity map. 
However, choosing $G$ to be a different (basic) projective module gives us a non-trivial embedding of algebras. In our main application, $G$ is the direct sum of all indecomposable projective-injective modules in a block of a Schur algebra, and an application of the combinatorial results in the rest of the paper is the identification of the module $t(P)$ for this choice of $G$. When there are $2p^k$ simples in the block, Theorem~\ref{thm:2.4} allows us to show that the basic algebra of the Schur algebra embeds into its Ringel dual, from which we can deduce Ringel self-duality of the block, in Section~\ref{sec:ringel-self-duality}.

\bigskip

\subsection{Quasi-hereditary algebras and Ringel duality}

Assume $S$ is a finite-dimensional algebra over $K$, and assume $(\La, \leq)$ is a poset labelling the simple $S$--modules. 
For $\lambda\in \La$, let $L(\lambda)$ be the corresponding simple module, and let $P(\lambda)$ and $Q(\lambda)$ be its projective cover and injective hull respectively.
The standard module $\Delta(\lambda)$ is defined to be the largest quotient of $P(\lambda)$ such that all its composition factors are of the form $L(\mu)$ with $\mu\leq \lambda$. Dually, one defines the costandard module $\nabla(\lambda)$. We will also refer to $\lambda$ as the highest weight of the modules just introduced.
With these, the algebra $S$ is \emph{quasi-hereditary} if for all $\lambda\in \La$,
\begin{itemize}
\item[(i)]   the simple module $L(\lambda)$ occurs only once as
a composition factor of $\Delta(\lambda)$, and
\item[(ii)] the projective module $P(\lambda)$ has a filtration with quotients isomorphic to $\Delta(\mu)$ for $\lambda \leq \mu$.
\end{itemize}

Recently, Coulembier showed that a finite-dimensional algebra with simple preserving duality
admits at most one quasi-hereditary structure \cite[Theorem 2.1.1]{Coul}, which we will see in Section~\ref{sec:3.1} is the case for Schur algebras.

\medskip

Throughout, all modules we consider will be finite-dimensional.
For a module $M$ we write $[M:L(\lambda)]$ for the multiplicity of $L(\lambda)$ as a composition factor of $M$.
If $M$ has a filtration whose quotients are standard modules then the multiplicity of $\Delta(\lambda)$ in such a filtration is independent of the filtration (see for example  \cite[\textsection A1 (7)]{D}); we denote this multiplicity by $[M:\Delta(\lambda)]$. 
Similarly, if $M$ has a filtration whose quotients are costandard modules then we write $[M:\nabla(\lambda)]$ for the filtration multiplicity.

If the simple modules have $1$-dimensional endomorphism algebras, which will hold in our setting of Schur algebras from Section~\ref{sec:schur-algs} onwards, then we have the following reciprocity relation:
\[ [P(\lambda):\Delta(\mu)]=[\nabla(\mu):L(\lambda)]\qquad\forall\ \lambda,\mu \in \La. \]
Let $\mathscr{F}(\Delta)$ be the category of $S$--modules which have a filtration by standard modules, and similarly define $\mathscr{F}(\nabla)$.
For each $\lambda\in \La$ there is a unique indecomposable module in $\mathscr{F}(\Delta)\cap \mathscr{F}(\nabla)$ with highest weight $\lambda$, and we denote this module by $T(\lambda)$. We have short exact sequences
\[ 0\to \Delta(\lambda)\to T(\lambda)\to X(\lambda)\to 0\quad \text{and}\quad 0\to Y(\lambda)\to T(\lambda)\to\nabla(\lambda)\to 0 \]
where $X(\lambda)\in\mathscr{F}(\Delta)$ has a standard filtration consisting only of quotients $\Delta(\mu)$ where $\mu\le\lambda$, and similarly $Y(\lambda)\in\mathscr{F}(\nabla)$ has a costandard filtration consisting only of quotients $\nabla(\mu)$ where $\mu\le\lambda$. 


Any direct sum of such indecomposable modules is called a \emph{tilting module}.
Let $T$ be a full tilting module, that is, each $T(\lambda)$ occurs as a summand. Then the endomorphism algebra of $T$ is again quasi-hereditary with respect to $(\La, \leq^{op})$ \cite{R2}. This endomorphism algebra is a \emph{Ringel dual} $S'$ of $S$, and is unique up to Morita equivalence. Moreover, $(S')'$ is Morita equivalent to $S$.

\medskip


\section{Schur algebras $S(2,r)$ and $S_q(2,r)$}\label{sec:schur-algs}

\subsection{The algebras}\label{sec:3.1}

We consider classical Schur algebras $S(2,r)$ and quantum Schur algebras $S_q(2,r)$ for $q$ a fixed primitive $\ell$-th root of unity with $\ell\ge 2$, over $K$.
We refer the reader to \cite{D} and \cite{green} for further details.
One construction is as follows: consider the bialgebra $A(2)$, or $A_q(2)$, with algebra generators $c_{ij}, 1\leq i, j\leq 2$. These are graded with $|c_{ij}|=1$, and can be written as the direct sum  $\bigoplus_{r\geq 0} A(2,r)$, or $\bigoplus_{r\geq 0} A_q(2,r)$, of subcoalgebras, where $A(2,r)$ or 
$A_q(2,r)$ is the component of degree $r$. Then $S(2,r)= A(2,r)^*$ and $S_q(2,r) = A_q(2,r)^*$, each an algebra of dimension ${3+r\choose 3}$.

Let $d_q= c_{11}c_{22} - c_{12}c_{21}$ be the $q$-determinant. Then the localization of $A_q(2)$ at $d_q$ is a Hopf algebra, and $G_q(2)$ is the quantum group whose coordinate algebra is equal to this Hopf algebra. This is 
an appropriate setting for homological techniques analogous to those for group representations to be applied.

The set $\La:= \Lambda^+(2,r)$ of partitions of $r$ with at most two parts labels the simple modules of both $S(2,r)$ and $S_q(2,r)$. For each $\lambda\in\La$, there is a costandard module $\nabla(\lambda)$ with simple socle $L(\lambda)$.
As in the classical case, a costandard module is also isomorphic to a $q$-symmetric power of the natural module, up to tensoring with the $q$-determinant (see \cite[2.1.8]{DD}). There is also a contravariant duality $(-)^{\circ}$ fixing each simple module (see \cite[p.~83]{D}).
The standard module $\Delta(\lambda)$ is defined by $\Delta(\lambda):=\nabla(\lambda)^{\circ}$, and we note that $T(\lambda)^\circ\cong T(\lambda)$ and $P(\lambda)^\circ\cong Q(\lambda)$.

These standard modules and costandard modules define a quasi-hereditary structure on the algebras, and hence their blocks, in both the classical and quantum cases (see \cite{Dhom} and \cite{PW}). The partial order is the dominance order of partitions.

\bigskip

\subsection{Twisted tensor products and short exact sequences}\label{sec:3.2}

The notation for weights as used in \cite{C} is as follows.
Consider a restricted partition $\lambda = (\lambda_1, \lambda_2)$, i.e.~a partition of the form
\[ \lambda = (i + \delta, \delta), \ \ \mbox{where } \  \delta = \lambda_2\ \ \text{and}\ \ i=\lambda_1-\lambda_2 \]
such that $0\leq i \leq \ell-1$ and $\delta\geq 0$.
Let $\rho:= (1,0)$ and $\varpi:=(1,1)$.
We focus on such weights where $0\leq i \leq \ell-2$. Define $\bi$  by $i + \bi  = \ell-2$, and let 
\[ \wt{\lambda} := \bi\rho + (i-\ell+1+\delta)\varpi \]
When $\lambda$ is restricted, note that $\Delta(\lambda)$ and $\Delta(\wt{\lambda})$ are simple. 
In fact, we may parametrise weights $\lambda$ by their difference $i=\lambda_1-\lambda_2$ and $\wt{\lambda}$ will correspond to $\bi=\ell-2-i$, as we now describe, showing why these are particularly useful partitions to consider.

\medskip

We may relate the algebras $S_q(2,r)$ and $S_q(2, r-2)$ for $r\ge 2$ as follows.
Let $\ve = \xi_{(r)}$ (following the notation of \cite[\textsection 4.2]{D}
), a certain idempotent in $S:=S_q(2,r)$. There is an isomorphism of algebras
\begin{equation}\label{eqn:SeS}
S/S\ve S \cong S_q(2, r-2)
\end{equation}
(see \cite[\textsection 4.2 (18)]{D}, and for the classical case \cite[1.2]{E}).

The inflation functor identifies 
the simples, standard and costandard, and tilting  modules of 
$S$ with highest weights $\lambda\neq (r)$ in $\Lambda^+(2, r)$ with those
with highest weight $\lambda - \varpi$ in $\Lambda^+(2, r-2)$.
Iterating this allows us to ignore factors of the $q$-determinant, and therefore in explicit calculations we will identify a weight $\mu$ with $\mu_1-\mu_2$. Note that if $\lambda$ and $\mu$ are partitions of the same size, then $\lambda \geq \mu$ in the dominance order if and only if $\lambda_1-\lambda_2\geq \mu_1-\mu_2$.

\medskip

There is a Frobenius morphism $F: G_q(2)\to G(2)$ \cite[\textsection 1.3]{C}. We also denote the classical Frobenius map $G(2)\to G(2)$ by $F$, and $G_q(2)$ by $G$ and $G(2)=\GL_2(K)$ by $\bar{G}$. 
We recall from \cite{C2} short exact sequences and isomorphisms crucial for computing decomposition numbers and filtration multiplicities. We state these using our convention on the labelling of weights, and write $\bar{L}, \bar{\Delta}, \bar{\nabla}$, $\bar{T}$ for the simple, standard, costandard and tilting modules for the classical Schur algebra when it is useful to distinguish them from modules for the quantum Schur algebra. The tensor products below are over the field $K$, that is, $\otimes=\otimes_K$ throughout.

Note that we only interchange the order in tensor products between classical factors. Within twisted tensor products where one factor is classical but the other is not, we always have the twisted factor to the right, as in \cite{C}.

\begin{prop}[{\cite[\textsection 3]{C2}}]\label{prop:tools} 
	Let $n\in\mathbb{N}$. Let $0\le i\le l-2$ and $\bi$ be such that $i+\bi=l-2$. Then we have non-split short exact sequences 
	\begin{equation}\label{eqn:1}
	0 \longrightarrow \Delta(\bi)\otimes \bar{\Delta}(n-1)^F \longrightarrow \Delta(\ell n+i) \longrightarrow \Delta(i) \otimes \bar{\Delta}(n)^F \longrightarrow 0
	\end{equation}
	and
	\begin{equation}\label{eqn:2}
	0\longrightarrow \Delta(\ell n+i) \longrightarrow T(\ell+i)\otimes\bar{\Delta}(n-1)^F \longrightarrow \Delta\big(\ell(n-1)+\bi\big) \longrightarrow 0.
	\end{equation}
	Moreover, we also have isomorphisms
	\begin{equation}\label{eqn:3}
	\Delta(\ell n-1) \cong \Delta(\ell-1)\otimes\bar{\Delta}(n-1)^F,
	\end{equation}
	\begin{equation}\label{eqn:4}
	T(\ell n-1)\cong T(\ell-1)\otimes\bar{T}(n-1)^F,
	\end{equation}
	\begin{equation}\label{eqn:5} 
	T(\ell n+i)\cong T(\ell+i)\otimes\bar{T}(n-1)^F.
	\end{equation}
	We also have the analogous short exact sequences and isomorphisms for the corresponding classical modules and $0\le i\le p-2$, replacing each occurrence of $\ell$ by $p$ (see \cite{EH}).
\end{prop}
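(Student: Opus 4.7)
The plan is to derive these structural results from the quantum Steinberg tensor product theorem together with the character theory of Weyl and tilting modules for quantum $\GL_2$, using the exactness of tensoring with Frobenius twists of classical modules and the fact that such tensoring preserves $\mathscr{F}(\Delta)$ and $\mathscr{F}(\nabla)$.

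I would start with the isomorphism (3). Since $\ell-1$ is the largest restricted weight, $\Delta(\ell-1)=L(\ell-1)=\nabla(\ell-1)=T(\ell-1)$ is the quantum Steinberg module, and the Steinberg tensor product theorem yields $L(\ell n-1)\cong L(\ell-1)\otimes\bar{L}(n-1)^F$ at the level of simples. Lifted to standards, $\Delta(\ell-1)\otimes\bar{\Delta}(n-1)^F$ is a highest weight module of weight $\ell n-1$ whose formal character agrees with that of $\Delta(\ell n-1)$ by the Weyl character formula; the universal property of the standard module then produces a surjection $\Delta(\ell n-1)\twoheadrightarrow\Delta(\ell-1)\otimes\bar{\Delta}(n-1)^F$ which by dimension is an isomorphism. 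The isomorphism (4) follows analogously: $T(\ell-1)\otimes\bar{T}(n-1)^F$ lies in $\mathscr{F}(\Delta)\cap\mathscr{F}(\nabla)$, has highest weight $\ell n-1$ of multiplicity one, and once verified to be indecomposable it must coincide with $T(\ell n-1)$.

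For the short exact sequence (1), both $\Delta(\ell n+i)$ and $\Delta(i)\otimes\bar{\Delta}(n)^F$ are generated by highest weight vectors of weight $\ell n+i$, so the universal property of $\Delta(\ell n+i)$ produces a surjection onto the latter; a Weyl character calculation then shows the character of the kernel matches that of $\Delta(\bar i)\otimes\bar{\Delta}(n-1)^F$. To identify the kernel with that module I would exhibit a highest weight vector of weight $\ell(n-1)+\bar i$ inside it, producing a nonzero map $\Delta(\ell(n-1)+\bar i)\to \ker$, and conclude by (3) and a dimension count. Non-splitness then follows from the indecomposability of $\Delta(\ell n+i)$. The isomorphism (5) mirrors (4): $T(\ell+i)\otimes\bar{T}(n-1)^F$ is tilting of highest weight $\ell n+i$ with multiplicity one, and indecomposability identifies it with $T(\ell n+i)$. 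Finally (2) is obtained by applying $-\otimes\bar{\Delta}(n-1)^F$ to the standard filtration $0\to\Delta(\bar i)\to T(\ell+i)\to\Delta(\ell+i)\to 0$ of $T(\ell+i)$, expanding $\Delta(\ell+i)\otimes\bar{\Delta}(n-1)^F$ via (1) with $n=1$ combined with the Clebsch--Gordan identity $0\to\bar{\Delta}(n-2)\to\bar{\Delta}(1)\otimes\bar{\Delta}(n-1)\to\bar{\Delta}(n)\to 0$ for classical $\GL_2$, and then recombining the resulting four standard sections using (1) applied to $\ell n+i$ and to $\ell(n-1)+\bar i$ respectively to recognise the submodule isomorphic to $\Delta(\ell n+i)$ and the quotient isomorphic to $\Delta(\ell(n-1)+\bar i)$.

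The main obstacle I expect is upgrading the character-level identities to module-theoretic statements: specifically, verifying the indecomposability of the tensor products in (4) and (5) and the non-splitness of (1) and (2). Showing $T(\ell+i)\otimes\bar{T}(n-1)^F$ is indecomposable rather than a direct sum of several tilting summands requires either exhibiting a simple top and socle or analysing the endomorphism algebra directly, and recognising the submodule and quotient in (2) (rather than some other grouping of the same four standard sections, compatible with the quasi-hereditary order) requires tracking highest weight vectors explicitly rather than relying only on Grothendieck-group arithmetic. The classical analogues follow by the same arguments with the classical Frobenius $F:\bar G\to\bar G$ and $\ell$ replaced by $p$.
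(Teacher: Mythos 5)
The paper does not supply a proof for this proposition; it is cited from Cox~\cite{C2}. Your sketch takes the route of a direct derivation from the quantum Steinberg tensor product theorem, formal characters, exactness of $-\otimes(\bar V)^F$, and the preservation of $\mathscr F(\Delta)$ and $\mathscr F(\nabla)$ under twisted tensor products. This is the right toolbox and matches how these facts are established in the literature, and you are right to flag indecomposability and non-splitness as the points needing additional work.

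There is, however, one unflagged gap that is really the heart of the matter. For \eqref{eqn:1} and \eqref{eqn:3} you argue that since $\Delta(i)\otimes\bar\Delta(n)^F$ (resp.\ $\Delta(\ell-1)\otimes\bar\Delta(n-1)^F$) has highest weight $\ell n+i$ (resp.\ $\ell n-1$) and the correct character, ``the universal property of the standard module then produces a surjection''. This does not follow from character data alone: the universal property applies only once you know the target is generated by its highest weight vector, and this is precisely the nontrivial content. The standard way to obtain it is through the Frobenius kernel of $G$: since $i$ is restricted, $L(i)=\Delta(i)$ is simple and projective-injective there, so submodules of $L(i)\otimes V^F$ over the Frobenius kernel correspond to subspaces of $V$, and $G$-submodules to $\bar G$-submodules of $V$; because $\bar\Delta(n)$ is a highest weight module, so is $L(i)\otimes\bar\Delta(n)^F$. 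The same Frobenius-kernel analysis is what yields the simple top of $T(\ell+j)\otimes\bar V^F$ (cf.\ Lemma~\ref{lem:3.3}), and that is how one would actually settle the indecomposability you defer in \eqref{eqn:4}--\eqref{eqn:5} and the non-splitness of \eqref{eqn:2}. Two further small slips: the Clebsch--Gordan filtration should read $0\to\bar\Delta(n)\to\bar\Delta(1)\otimes\bar\Delta(n-1)\to\bar\Delta(n-2)\to 0$, and the $\Delta$-filtration of $T(\ell+i)$ has $\Delta(\ell+i)$ as the \emph{submodule}, not the quotient (in a non-split $\Delta$-filtration the larger weight sits at the bottom, since $\Ext^1(\Delta(\mu),\Delta(\lambda))$ vanishes unless $\mu<\lambda$). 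Neither error changes the $\Delta$-multiplicity count, but the orders you wrote would not give short exact sequences in general.
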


\begin{rem}\label{rem:cartan}
	Recall that the indecomposable tilting modules belong to $\mathscr{F}(\Delta)$; the above can be used inductively to determine their $\Delta$-quotients.
	In particular, the sequence \eqref{eqn:2} immediately gives the $\Delta$-quotients of $T(\ell+i)$. 
	Furthermore, by \eqref{eqn:2} and \eqref{eqn:4}, the total number $\Delta$-quotients of an indecomposable tilting module is a power of $2$, and hence so is each diagonal Cartan number for the Ringel dual of a block of a Schur algebra.
\end{rem}

It is also known precisely which standard modules are irreducible; the classical analogue also holds, replacing $\ell$ by $p$ (see \cite{EH}).
\begin{thm}[{\cite[Corollary 2.1.2]{C}}]\label{thm:simple-delta}
	Let $n\in\mathbb{N}_0$. Then $\Delta(n)$ is irreducible if and only if $n\in\{0,1,\dotsc,l-1\}\cup\{\ell ap^k-1 : a\in\{2,3,\dotsc,p\},\ k\in\mathbb{N}_0\}$.
\end{thm}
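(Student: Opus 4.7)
The plan is to prove the characterization by strong induction on $n$, using the short exact sequences and isomorphisms of Proposition~\ref{prop:tools} to reduce the simplicity of $\Delta(n)$ to the simplicity of a classical standard module $\bar{\Delta}(m)$ at a strictly smaller integer $m$; the analogous classical statement (with $\ell$ replaced by $p$) would be proved by the parallel argument, so we may freely invoke it at smaller integers. The base case is $0 \le n \le \ell - 1$, for which the weight is restricted and $\Delta(n) = L(n)$ by the remarks in Section~\ref{sec:3.2}; this already lies in the first set of the theorem.

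For $n \ge \ell$ I would write $n = \ell m + i$ uniquely with $m \ge 1$ and $0 \le i \le \ell - 1$, and then split into two cases. Note first that every $n$ in the set of the theorem satisfies $n \equiv -1 \pmod{\ell}$, so when $i \le \ell - 2$ it suffices to show that $\Delta(n)$ is reducible. This is immediate from the non-split sequence \eqref{eqn:1}: $\Delta(\bar{i}) \otimes \bar{\Delta}(m-1)^F$ is a non-zero proper submodule of $\Delta(n)$ with non-zero quotient $\Delta(i) \otimes \bar{\Delta}(m)^F$, since $i, \bar{i} \ge 0$ and $m \ge 1$.

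The crucial case is $i = \ell - 1$, equivalently $n = \ell(m+1) - 1$. Here I would apply the isomorphism \eqref{eqn:3} to obtain $\Delta(n) \cong \Delta(\ell - 1) \otimes \bar{\Delta}(m)^F$, where $\Delta(\ell - 1) = L(\ell - 1)$ is restricted and simple. Invoking a Steinberg-type tensor product theorem in this setting, the composition factors of $\Delta(n)$ are precisely $L(\ell - 1) \otimes \bar{L}(\mu)^F \cong L(\ell\mu + \ell - 1)$ as $\bar{L}(\mu)$ runs over composition factors of $\bar{\Delta}(m)$ (with matching multiplicities), and these are pairwise non-isomorphic. Hence $\Delta(n)$ is simple if and only if $\bar{\Delta}(m)$ is. By the classical inductive hypothesis at the smaller integer $m$, this occurs precisely when $m \in \{0, \ldots, p-1\}$ or $m = a p^{k+1} - 1$ for some $a \in \{2, \ldots, p\}$ and $k \ge 0$. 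Equivalently, $m + 1 \in \{1, \ldots, p\} \cup \{a p^{k+1} : a \in \{2, \ldots, p\},\ k \ge 0\}$, and multiplying by $\ell$ and subtracting $1$ identifies the admissible $n$ as $\ell - 1$ or $\ell a p^k - 1$ with $a \in \{2, \ldots, p\}$ and $k \ge 0$, matching the set in the statement (with $\ell - 1$ absorbed into $\{0, \ldots, \ell - 1\}$).

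The main obstacle I anticipate is the Steinberg-type input: one must know that $L(\ell - 1) \otimes \bar{L}(\mu)^F$ is simple of highest weight $\ell\mu + \ell - 1$, and that tensoring with Frobenius twists is exact and compatible with composition factor multiplicities, so that the number of composition factors of $\Delta(n)$ really does equal that of $\bar{\Delta}(m)$. Given this standard fact, the simultaneous induction on integer arguments closes cleanly, and the disjunction in the theorem is exactly what the base case together with the case analysis on $n \bmod \ell$ produces.
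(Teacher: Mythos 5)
The paper does not supply a proof of Theorem~\ref{thm:simple-delta}; it cites the result directly from Cox's thesis \cite[Corollary 2.1.2]{C}. So there is no paper argument to compare against, and the question is simply whether your reconstruction is sound. It is.

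Your case split on $n \bmod \ell$ is exactly the right move given the tools the paper makes available. The reducibility argument when $\ell \nmid n+1$ is clean: both end terms of the (non-split) sequence \eqref{eqn:1} are visibly non-zero once $m\ge 1$, so $\Delta(n)$ has a proper non-zero submodule; no induction is needed there. The reduction in the case $\ell \mid n+1$ via \eqref{eqn:3} together with the quantum Steinberg tensor product theorem (which the paper flags at \cite[\S 3.2 (5)]{D}) correctly identifies the composition factor lattice of $\Delta(\ell(m+1)-1)$ with that of $\bar{\Delta}(m)$, since the Frobenius twist and $L(\ell-1)\otimes(-)$ are exact and the resulting simples $L(\ell\mu+\ell-1)$ are pairwise non-isomorphic. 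Your bookkeeping translating ``$\bar{\Delta}(m)$ simple'' into the stated set for $n$ via $n=\ell(m+1)-1$ is also correct (the $m+1\in\{1,\dots,p\}$ branch contributes $\ell-1$ and $\ell a p^0-1$, the $m+1=ap^{k+1}$ branch contributes the rest).

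One small point of hygiene rather than substance: you describe the induction as ``simultaneous,'' but it need not be. The reduction in the quantum case always lands on a \emph{classical} standard module at a strictly smaller integer, while the classical reduction stays classical. So the cleanest organization is: first establish the classical statement on its own by strong induction on $m$ (the argument is literally yours with $\ell=p$), then deduce the quantum statement for all $n$ in a single pass using the already-proved classical result. Stating it this way removes any appearance of circularity, though as written your argument is not actually circular.
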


There is also a $q$-analogue of Steinberg's tensor product theorem \cite[\textsection 3.2 (5)]{D}.

\medskip

The indecomposable tilting modules, as well as other twisted tensor products that we will consider, have simple tops, by \cite[Lemma 11]{EH} in the classical case and its $q$-analogue which we state below. This may be proved using Frobenius kernels as in the classical case.

\begin{lem}\label{lem:3.3}
	Let $\bar{V}$ be a module for $\bar{G}$, and $\lambda = \ell m + i$ where $0\leq i\leq \ell-2$. Then 
	\[ \Hom_{G}(T(\ell + j)\otimes \bar{V}^F, L(\lambda)) \cong \begin{cases}
	\Hom_{\bar{G}}\big(\bar{V},\bar{L}(m)\big) & \text{if }i+j=\ell-2,\\
	0 & \text{otherwise}.
	\end{cases} \]
\end{lem}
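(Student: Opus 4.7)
The plan is to adapt the classical argument of \cite[Lemma 11]{EH} to the quantum setting by working with the Frobenius kernel $G_1 := \ker F$ inside $G = G_q(2)$ (the infinitesimal quantum group, i.e.\ the small quantum group $u_\ell(\mathfrak{sl}_2)$ in the quantum case). The three key inputs are the tensor-hom adjunction, the triviality of $\bar{W}^F$ as a $G_1$-module (which allows us to rewrite $\Hom_G(\bar{W}^F, N)$ as $\Hom_{\bar G}(\bar W, N^{G_1})$ via the Frobenius identification $G/G_1 \cong \bar G$), and the $q$-Steinberg tensor product theorem $L(\ell m + i) \cong L(i) \otimes L(m)^F$.

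First, I would apply tensor-hom adjunction and then pass to $G_1$-invariants to obtain
\[ \Hom_G(T(\ell+j) \otimes \bar{V}^F, L(\lambda)) \cong \Hom_G(\bar{V}^F, \Hom_K(T(\ell+j), L(\lambda))) \cong \Hom_{\bar{G}}(\bar{V}, N), \]
where $N := \Hom_{G_1}(T(\ell+j), L(\lambda))$ is regarded as a $\bar G$-module via Frobenius untwisting of its inherited $G/G_1$-action. Using Steinberg to write $L(\lambda) = L(i) \otimes L(m)^F$ and the $G_1$-triviality of $L(m)^F$, I can then factor out the Frobenius-twisted piece:
\[ N \cong \Hom_{G_1}(T(\ell+j), L(i)) \otimes \bar{L}(m). \]

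The crux is to identify $\Hom_{G_1}(T(\ell+j), L(i))$ as a $\bar G$-module. From \eqref{eqn:2} with $n=1$ we have $0 \to \Delta(\ell+j) \to T(\ell+j) \to \Delta(\bar{j}) \to 0$ with $\Delta(\bar{j}) = L(\bar{j})$ (since $\bar{j}\le\ell-2$), and combining with the self-duality $T(\ell+j)\cong T(\ell+j)^\circ$ and the $\Delta$-structure of $\Delta(\ell+j)$ obtained from \eqref{eqn:1}, one finds the Loewy structure of $T(\ell+j)$ to be $L(\bar{j}) \mid L(\ell+j) \mid L(\bar{j})$ from top to socle. Restricting to $G_1$ and using $L(\ell+j)|_{G_1}\cong L_1(j)^{\oplus 2}$ from Steinberg, a dimension count together with socle considerations identifies $T(\ell+j)|_{G_1}$ with the $G_1$-injective hull $Q_1(\bar{j})$ of $L_1(\bar{j})$. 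Hence $\Hom_{G_1}(T(\ell+j), L_1(i))$ vanishes unless $i = \bar{j}$, i.e.\ $i+j=\ell-2$; in the nonzero case the unique $G_1$-map (up to scalar) is the $G$-equivariant projection onto the top of $T(\ell+j)$, so the induced $\bar G$-action on the resulting $1$-dimensional space is trivial.

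Substituting back, $N\cong \bar{L}(m)$ when $i+j=\ell-2$ and $N=0$ otherwise, yielding the claimed formula. The main obstacle is the identification $T(\ell+j)|_{G_1}\cong Q_1(\bar{j})$ in the quantum case, which requires structural theory of projective-injective modules for the small quantum group $u_\ell(\mathfrak{sl}_2)$; this is standard but nontrivial, and is the one place where we genuinely invoke the quantum analogue of classical infinitesimal theory. All other steps proceed formally as in the classical argument.
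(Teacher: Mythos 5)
Your proof is correct and follows precisely the route the paper intends: the paper's own "proof" is just the remark that the statement "may be proved using Frobenius kernels as in the classical case" (referencing \cite[Lemma 11]{EH}), and your argument carries out that Frobenius-kernel computation in full, including the adjunction $\Hom_G(\bar V^F,X)\cong\Hom_{\bar G}(\bar V,\Hom_{G_1}(-,-))$, Steinberg's tensor product theorem, and the identification $T(\ell+j)|_{G_1}\cong Q_1(\bar j)$. The one point you correctly flag as nontrivial — that the restriction of $T(\ell+j)$ to the infinitesimal quantum group is the injective hull $Q_1(\bar j)$ — is indeed the quantum analogue of the classical fact and is the key input; it is standard (following from tilting modules restricting to projective $G_1$-modules, plus a dimension and socle count), so your proof is complete modulo a citation for that structural result.
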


\bigskip

\subsection{Blocks}\label{sec:blocks}

From now on, a block will always refer to a block of a Schur algebra, either classical or quantum. A block will be viewed either as an algebra, or as the set of weights for its simple modules; it will be clear from the context what is meant.
For the rest of the article, unless otherwise stated, we fix an integer $a$ such that $2\leq a\leq p$. Let $B_w$ denote a block with $w$ weights. (In other words, $B$ is a block of size $|B|=w$, meaning that $B$ contains exactly $w$ simple modules.)
Our focus will be on blocks with $ap^k$ weights where $k\in\mathbb{N}_0$, as later we will show in Lemma~\ref{lem:4.6} that blocks $B_w$ with $w$ not of form $ap^k$ cannot be Ringel self-dual.

\medskip

From \cite[\textsection 4.2]{C}, a quantum block is \emph{primitive} if no weight in it is congruent to $-1$ (mod $\ell$). Thus, by \eqref{eqn:1}, primitive blocks of $S_q(2,r)$ are of the form
\[ \{s\le r \mid s=2k\ell+i\ \ \text{or}\ \ s=(2k+1)\ell+\bi,\ \ k\in\mathbb{N}_0 \} \]
where $0\le i\le \ell-2$ and $i+\bi=\ell-2$. The same holds for primitive classical blocks, replacing $\ell$ by $p$.

Moreover, any two classical blocks with the same number of simple modules are Morita equivalent as quasi-hereditary algebras \cite[Theorem 13]{EH}, and using this combined with \cite[Proposition 4.2.4]{C}, so too are any classical block and imprimitive quantum block with the same number of simple modules. By an analogous argument, two primitive quantum blocks with the same number of simple modules are also Morita equivalent as quasi-hereditary algebras, and so it will suffice for our main results to consider blocks $B$ as only dependent on their size, as well as to assume that $B$ is primitive. 

\medskip

In order to prove results for $B=B_{ap^k}$, many of our arguments will involve induction on $k$, and so it will be necessary to describe the above Morita equivalences explicitly as the smaller blocks involved in the induction may themselves be imprimitive. Also intrinsic to the combinatorial patterns found in decomposition numbers and tilting multiplicities is a partitioning of $B_{ap^k}$ into intervals of $p^k$ weights each. We describe these general features in the following section.

\bigskip

\subsection{Intervals}\label{sec:intervals}

Consider the (quantum) primitive block $B$ with lowest weight $i$ where $0\leq i\leq \ell-2$. (The classical case is entirely analogous.) Write $\bi = \ell - 2 - i$. 
Each weight of $B$ is of the form $\lambda=\ell m+i^*$ for some $m\in\mathbb{N}_0$ and $i^*\in\{i,\bi\}$ depending on the parity of $m$.
(Note that we allow the possibility that $p=2$ or $\ell=2$, but of course these cannot occur at the same time since $q$ is a primitive $\ell$-th root of unity in $K$.)

\begin{defn}\label{def:3.7}
	For $c\in\{0,1,\dotsc,p-1\}$ and $k\in\mathbb{N}_0$, define the $p^k$-interval
	\[ I_c^{(k)} := \{ \lambda=\ell m+i^* \in B \mid cp^k\le m<(c+1)p^k \}. \]
\end{defn}
We remark that the above intervals should include an extra subscript $i$ depending on the lowest weight in the block, but we omit this to lighten the notation and interpret $I_c^{(k)}$ to depend on $B$ itself. Letting $B$ be a block such that $|B|=ap^k$, we observe that
\[ B = I_0^{(k)} \sqcup I_1^{(k)} \sqcup \dotsc \sqcup I_{a-1}^{(k)}. \]

\medskip

We now describe the different cases of blocks $B$ that will appear in our inductive arguments. 
\begin{itemize}
	\item[(i)] If $B$ is primitive, then its weights and intervals are as described above. We will without loss of generality always assume that its lowest weight is $i$, for some $0\le i\le \ell-2$ (or $0\le i\le p-2$ in the classical case). 
	
	\item[(ii)] In the inductive steps of our proofs, arbitrary classical blocks will occur, namely when we apply inductive hypotheses on blocks or intervals of smaller size. For a weight $\lambda=\ell m+i^*\in I_c^{(k)}$, we have that $m=cp^k+m_0$ with $0\le m_0\le p^k-1$.
	
	\begin{itemize}
		\item[(a)] If $m_0\not\equiv -1$ (mod $p$), then $m$ lies in a primitive classical block $B_{ap^{k-1}}$. 
		Namely, we can write $m=p(cp^{k-1}+m'_0)+j$ for some $0\le j\le p-2$, and $m$ belongs to the interval $I_c^{(k-1)}$.
		
		\item[(b)] If $m_0\equiv -1$ (mod $p$), then $m$ lies in an imprimitive classical block. Suppose that $m\equiv -1$ (mod $p^t$) but $m\not\equiv -1$ (mod $p^{t+1}$) for some $1\le t<k$. (Note that (a) corresponds to $t=0$, while $t=k$ would hold if and only if $m=(c+1)p^k-1$, in which case $\bar{\Delta}(m)$ is simple.) 
		
		Then we can write $m=(p^t-1) + p^t(cp^{k-t}+m_1)$ where $m_1\not\equiv -1$ (mod $p$). Thus, $m$ belongs to a block $\bar{\Delta}(p^t-1)\otimes \bar{B}^{F^t}$ which is Morita equivalent to $\bar{B}$ as quasi-hereditary algebras (see \cite{EH}), and $\bar{B}$ is a primitive block of size $ap^{k-1-t}$. 
		
		Moreover, $m$ maps to $cp^{k-t}+m_1\in \bar{I}_c^{(k-1-t)}\subseteq \bar{B}$. (Note the lowest weight of $\bar{B}$ is not necessarily equal to $i$ nor $\bi$.)
	\end{itemize}
	
	\item[(iii)] Take two (quantum) blocks $B'$ and $B''$ which both have $0\le i\le\ell-2$ as lowest weight, and write $\Gamma_1$, $\Gamma_2$ for the weights of $B'$, $B''$ respectively. If $|\Gamma_1|\le|\Gamma_2|$, then $\Gamma_1\subseteq \Gamma_2$, and every $B'$--module can be viewed as a $B''$--module. Namely, $B'$ is isomorphic to $B''/B''eB''$ as quasi-hereditary algebras, for some idempotent $e$, which follows from iterating the isomorphism \eqref{eqn:SeS} and projecting onto blocks.
\end{itemize}

\bigskip

\subsection{The socle of $\Delta(\lambda)$}

In this section, let $c\in\{1,2,\dotsc,p-1\}$ and $k\in\mathbb{N}_0$. Let $B=B_{ap^k}$ be a (primitive) block of a Schur algebra. For $0\leq i^*\leq \ell-2$, let $\bistar=\ell-2-i^*$, replacing $\ell$ by $p$ in the classical case. 

\medskip

The socles of standard modules are simple: as remarked in Section~\ref{sec:3.2}, the top of $T(\lambda)$ is simple, so by the contravariant duality $T(\lambda)\cong T(\lambda)^\circ$ we have that $\modtop T(\lambda)=\soc T(\lambda)=\soc\Delta(\lambda)$. 
The following gives an explicit description, and this will be an important input when describing the symmetry of decomposition numbers later.

\begin{defn}\label{def:sigma}
	\begin{itemize}
		\item[(a)] \emph{[Classical weights]}\ Suppose $n=cp^k+n_0$ and $-1\le n_0\le p^k-2$. Define 
		\[ \bar{\sigma}(n):=cp^k-2-n_0. \]
		Note that if $\lambda = pm+i^*$ with $0\leq i^*\leq p-2$ and $m=cp^k+m_0$ with $0\leq m_0\leq p^k-1$, then \[ \bar{\sigma}(\lambda)=p(cp^k-m_0-1)+\bistar=p\bar{\sigma}(m-1) + \bistar. \]
		
		\item[(b)] \emph{[Quantum weights]}\ Suppose $\lambda=\ell m+i^*$ where $0\le i^*\le \ell-2$ and $m=cp^k+m_0$ with $0\le m_0\le p^k-1$. Define 
		\[ \sigma(\lambda)=\ell(cp^k-m_0-1)+\bistar=\ell\bar{\sigma}(m-1) + \bistar. \]
	\end{itemize}
\end{defn}

The following is clear by Definition~\ref{def:sigma}; we remark that the classical analogue also holds.

\begin{lem}\label{lem:3.9} 
	The map $\sigma$ gives an order-reversing bijection $I_c^{(k)} \to I_{c-1}^{(k)}$ for all $1\le c\le p-1$. In particular, every weight $\mu\in I_d^{(k)}$ for $0\le d\le p-2$ is of the form $\mu=\sigma(\lambda)$ for a unique  $\lambda\in I_{d+1}^{(k)}$.
\end{lem}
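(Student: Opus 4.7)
The proof is essentially a direct verification from Definition~\ref{def:sigma}, so the plan is to unpack the definition carefully and check three things: (a) $\sigma$ takes $I_c^{(k)}$ into $I_{c-1}^{(k)}$ (not just into $B$, and that it lands in $B$ at all), (b) $\sigma$ is a bijection, and (c) $\sigma$ reverses the order.

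First, I would fix $\lambda = \ell m + i^* \in I_c^{(k)}$ and write $m = cp^k + m_0$ with $0 \le m_0 \le p^k - 1$. Then $\sigma(\lambda) = \ell m' + \bistar$ with $m' := cp^k - m_0 - 1 = (c-1)p^k + (p^k - 1 - m_0)$, so $(c-1)p^k \le m' \le cp^k - 1$, putting the $m$-index of $\sigma(\lambda)$ in the range for $I_{c-1}^{(k)}$. To see $\sigma(\lambda) \in B$, recall from Section~\ref{sec:intervals} that weights of the primitive block $B$ of lowest weight $i$ have $i^* = i$ for $m$ even and $i^* = \bi$ for $m$ odd. Since $m + m' = 2cp^k - 1$ is odd, $m$ and $m'$ have opposite parities, and since $\bistar$ is the swap of $i^*$ (i.e.\ $\bar{i} \leftrightarrow i$), the parity condition matches up and $\sigma(\lambda)$ is indeed a weight of $B$ lying in $I_{c-1}^{(k)}$.

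For bijectivity, I would observe that $\sigma$ is the composition of the two involutions $m_0 \mapsto p^k - 1 - m_0$ (on $\{0, \ldots, p^k-1\}$) and $i^* \mapsto \bistar$ (on $\{i, \bi\}$), and is therefore injective; since $|I_c^{(k)}| = |I_{c-1}^{(k)}| = p^k$, it is a bijection. For the order-reversing property, I would recall the convention from Section~\ref{sec:3.2} that we identify a weight $\mu$ with $\mu_1 - \mu_2$, so the dominance order on weights of $B$ is the numerical order. The consecutive weights in $B$ are strictly increasing with $m$ (the differences $\ell + \bi - i = 2\ell - 2 - 2i$ and $\ell + i - \bi = 2i + 2$ are both positive when $0 \le i \le \ell - 2$), so reversing $m$ via $m \mapsto 2cp^k - 1 - m$ reverses the order on $I_c^{(k)}$. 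Finally, the ``in particular'' statement is immediate by taking $c = d+1$.

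There is no real obstacle here; the only point that requires some care is the parity check showing that $\sigma(\lambda)$ actually lands in $B$ (rather than outside it), which relies crucially on $2cp^k - 1$ being odd.
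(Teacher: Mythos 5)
Your proof is correct and amounts to the careful verification that the paper elides (the paper simply asserts the lemma is ``clear by Definition~\ref{def:sigma}''). You correctly identify the only subtle point, namely the parity check showing $\sigma(\lambda)$ actually lands in $B$, which works because $m + m' = 2cp^k - 1$ is odd so the $\bar{i^*}$ in $\sigma(\lambda) = \ell m' + \bar{i^*}$ is consistent with the parity of $m'$.
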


\begin{lem}[{\cite[Lemma 3]{EH}}]\label{lem:c-socle}
	Suppose $n=cp^k+n_0$ and $-1\le n_0\le p^k-2$. Then $\soc\bar{\Delta}(n) = \bar{L}(\bar{\sigma}(n))$. 
\end{lem}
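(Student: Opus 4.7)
The plan is to proceed by induction on $k$ (equivalently, on $n$), combining the classical analogues of the short exact sequence \eqref{eqn:1} and isomorphism \eqref{eqn:3} from Proposition~\ref{prop:tools} with Steinberg's tensor product theorem for simple $\bar{G}$--modules. The base/degenerate cases are essentially free: when $k=0$ we have $n\in\{0,\ldots,p-2\}$ with $\baD(n)=\baL(n)$ simple and $\bar{\sigma}(n)=n$; and whenever $n_0=-1$ we have $n=cp^k-1$, which by the classical form of Theorem~\ref{thm:simple-delta} forces $\baD(n)$ to be simple, while $\bar{\sigma}(n)=n$ by direct computation.

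For the inductive step I would assume $k\geq 1$ and $0\leq n_0\leq p^k-2$, write $n=pm+i$ with $0\leq i\leq p-1$, and split on $i$. If $0\leq i\leq p-2$, the classical version of \eqref{eqn:1} gives
\[ 0\to\baD(\bi)\otimes\baD(m-1)^F\to\baD(n)\to\baD(i)\otimes\baD(m)^F\to 0, \]
so $\soc\baD(n)$ embeds into the left-hand term. Since $\baD(\bi)=\baL(\bi)$ is a restricted simple (as $\bi<p$), a Steinberg-type $\Hom$ argument identifies $\soc(\baL(\bi)\otimes\baD(m-1)^F)$ with $\baL(\bi)\otimes\soc(\baD(m-1))^F$. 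The inductive hypothesis supplies $\soc\baD(m-1)=\baL(\bar{\sigma}(m-1))$, and Steinberg collapses the result to $\baL(\bi+p\bar{\sigma}(m-1))$, which the second equality in Definition~\ref{def:sigma}(a) identifies with $\baL(\bar{\sigma}(n))$. If instead $i=p-1$, the classical form of \eqref{eqn:3} rewrites $\baD(n)\cong\baL(p-1)\otimes\baD(m)^F$; the analogous socle/Steinberg computation yields $\baL((p-1)+p\bar{\sigma}(m))$, and a short arithmetic check after writing $n_0=pn_1+(p-1)$ confirms this equals $\baL(\bar{\sigma}(n))$.

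The main obstacle is bookkeeping rather than any deep step. One has to verify in each branch that the reduced weight $m-1$ or $m$ again falls into the range $c'p^{k'}+[-1,p^{k'}-2]$ with $c'\in\{1,\ldots,p-1\}$ so that the inductive hypothesis applies, and to rule out degenerate situations (in particular $m=0$) using the lower bound $n\geq p^k-1$ built into the hypotheses. A secondary subtlety is that Definition~\ref{def:sigma}(a) is phrased only for decompositions $\lambda=pm+i^*$ with $i^*\leq p-2$, so the case $i=p-1$ genuinely requires the separate arithmetic verification above rather than a direct appeal to that formula.
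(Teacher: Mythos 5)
The paper gives no proof of this lemma; it is cited as \cite[Lemma 3]{EH}. Your argument is therefore an independent reconstruction, and it is correct in substance. Notably, it is exactly the technique the authors themselves use one lemma later for the quantum analogue, Lemma~\ref{lem:q-socle}(b): use~\eqref{eqn:1} to place the socle inside a twisted tensor factor, compute the socle of that factor by a Frobenius-kernel/Steinberg argument, and feed in the inductive hypothesis via the identity $\bar{\sigma}(pm+i^*)=p\bar{\sigma}(m-1)+\bar{i^*}$ of Definition~\ref{def:sigma}(a).

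One step is stated too quickly. In the branch $0\le i\le p-2$, the assertion that ``$\soc\bar{\Delta}(n)$ embeds into the left-hand term'' does not follow from the short exact sequence alone, even granted non-splitness; in general the socle of the middle term of a non-split extension need not lie in the submodule. What makes it work here is that $\soc\bar{\Delta}(n)$ is \emph{simple}: the submodule $\bar{\Delta}(\bar{i})\otimes\bar{\Delta}(m-1)^F$ is nonzero, hence has nonzero socle, which is contained in $\soc\bar{\Delta}(n)$ and so must equal it. The simplicity of $\soc\Delta(\lambda)$ is recorded in the paper just before Definition~\ref{def:sigma} (via $\soc T(\lambda)=\operatorname{top}T(\lambda)$ and $T(\lambda)^\circ\cong T(\lambda)$), and you should invoke it explicitly at this point; no such issue arises in the $i=p-1$ branch since~\eqref{eqn:3} is an isomorphism rather than an extension. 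Your bookkeeping is otherwise sound: for $0\le i\le p-2$ one has $m_0=(n_0-i)/p\in\{0,\dots,p^{k-1}-1\}$, so $m-1=cp^{k-1}+(m_0-1)$ with $m_0-1\in\{-1,\dots,p^{k-1}-2\}$; for $i=p-1$ (which forces $k\ge 2$) writing $n_0=pn_1+(p-1)$ gives $m=cp^{k-1}+n_1$ with $0\le n_1\le p^{k-1}-2$. In both branches the inductive hypothesis applies with the same $c$ and parameter $k-1$, and the lower bound $n\ge p^k$ (valid once $n_0\ge 0$) rules out $m=0$. The base cases $k=0$ and $n_0=-1$ are as you describe, the latter using the classical form of Theorem~\ref{thm:simple-delta} as you note.
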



\begin{lem}\label{lem:q-socle}
	\begin{itemize}
		\item[(a)] Suppose $\lambda = pm+i^*$ with $0\leq i^*\leq p-2$ and
			$m=cp^k+m_0$ with $0\leq m_0\leq p^k-1$. 
			Then $\soc\bar{\Delta}(\lambda)=\bar{L}(\bar{\sigma}\lambda)$. 
			
		\item[(b)] Suppose $\lambda=\ell m+i^*$ where $0\le i^*\le \ell-2$ and $m=cp^k+m_0$ with $0\le m_0+p^k-1$. Then $\soc\Delta(\lambda)=L(\sigma\lambda)$. 
	\end{itemize}
\end{lem}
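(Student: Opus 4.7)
The plan is to extract a simple submodule of $\bar\Delta(\lambda)$, respectively $\Delta(\lambda)$, from the leftmost term of the short exact sequence~\eqref{eqn:1}, and to conclude using the fact -- noted just before Definition~\ref{def:sigma} -- that the socle of a standard module is simple.

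For part (a), I first note that $0\le\bistar\le p-2$ makes $\bar\Delta(\bistar)=\bar L(\bistar)$ simple (Theorem~\ref{thm:simple-delta}, classical version). The classical analogue of~\eqref{eqn:1} then provides an embedding
\[ \bar L(\bistar)\otimes \bar\Delta(m-1)^F \hookrightarrow \bar\Delta(pm+i^*)=\bar\Delta(\lambda). \]
Writing $m-1=cp^k+(m_0-1)$ with $-1\le m_0-1\le p^k-2$ (both boundary cases $m_0=0$ and $m_0=p^k-1$ lie within this range), Lemma~\ref{lem:c-socle} applies and yields $\soc\bar\Delta(m-1)=\bar L(\bar\sigma(m-1))$. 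Since submodule inclusions are preserved by Frobenius twists and by tensoring, $\bar L(\bistar)\otimes \bar L(\bar\sigma(m-1))^F$ sits inside $\bar\Delta(\lambda)$; by Steinberg's tensor product theorem this tensor product equals the simple module $\bar L(p\bar\sigma(m-1)+\bistar)=\bar L(\bar\sigma\lambda)$. Simplicity of $\soc\bar\Delta(\lambda)$ then forces equality with this submodule.

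Part (b) proceeds identically in the quantum setting: $\Delta(\bistar)=L(\bistar)$ by Theorem~\ref{thm:simple-delta} (using $0\le\bistar\le\ell-2$); the sequence~\eqref{eqn:1} gives $L(\bistar)\otimes \bar\Delta(m-1)^F\hookrightarrow\Delta(\lambda)$; Lemma~\ref{lem:c-socle} identifies $\soc\bar\Delta(m-1)$ exactly as in (a); and the $q$-analogue of Steinberg's tensor product theorem recalled in Section~\ref{sec:3.2} yields the simple submodule $L(\bistar)\otimes \bar L(\bar\sigma(m-1))^F\cong L(\ell\bar\sigma(m-1)+\bistar)=L(\sigma\lambda)$, which must equal $\soc\Delta(\lambda)$. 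I foresee no genuine obstacle; the only care needed is the bookkeeping between Definition~\ref{def:sigma} and the index shifts in~\eqref{eqn:1}, in particular verifying that the shifted parameter $m-1$ remains within the range required by Lemma~\ref{lem:c-socle}.
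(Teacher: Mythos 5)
Your proof is correct. For part (b) it is essentially the paper's own argument spelled out in more detail: use~\eqref{eqn:1} to embed $L(\bistar)\otimes\bar\Delta(m-1)^F$ into $\Delta(\lambda)$, identify its socle via Lemma~\ref{lem:c-socle} and Steinberg, and conclude by simplicity of $\soc\Delta(\lambda)$. For part (a) you take a slightly different route from the paper: you reuse the same SES-plus-Steinberg mechanism, whereas the paper instead observes that $\lambda=pm+i^*$ is itself of the form $cp^{k+1}+n_0'$ with $-1\le n_0'\le p^{k+1}-2$, so Lemma~\ref{lem:c-socle} applies to $\lambda$ directly at scale $p^{k+1}$; the reformulation $\bar\sigma(\lambda)=p\bar\sigma(m-1)+\bistar$ noted in Definition~\ref{def:sigma}(a) then gives exactly your answer. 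Your version has the merit of treating (a) and (b) uniformly; the paper's is marginally shorter. One small thing worth being explicit about in (a): the step from $\bar L(\bar\sigma(m-1))\hookrightarrow\bar\Delta(m-1)$ to $\bar L(\bistar)\otimes\bar L(\bar\sigma(m-1))^F\hookrightarrow\bar L(\bistar)\otimes\bar\Delta(m-1)^F$ only needs exactness of Frobenius twist and of tensoring (which you invoke), not a full computation of the socle of the twisted tensor product — and you correctly avoid claiming the latter, instead appealing to the simplicity of $\soc\bar\Delta(\lambda)$ to finish. No gaps.
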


\begin{proof}
Part (a) follows from Lemma~\ref{lem:c-socle}. For part (b), we have by \eqref{eqn:1} that the socle of $\Delta(\lambda)$ equals that of $\bar{\Delta}(m-1)^F\otimes L(\bistar)$, which is $L(\ell\bar{\sigma}(m-1)+\bistar)$. 
\end{proof}


We make two more observations useful for our investigations into decomposition numbers and tilting multiplicities later.

\begin{lem}\label{lem:3.10} 
	Let $\rho$ be the largest weight in $I_c^{(k)}$ where $1\leq c\leq a-1$. Then $L(\sigma\rho)$ is the unique composition factor of $\Delta(\rho)$ with highest weight in $I_{c-1}^{(k)}$.
\end{lem}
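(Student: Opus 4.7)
The plan is to apply the short exact sequence \eqref{eqn:1} to $\Delta(\rho)$ and reduce the problem to an analogous classical statement about $\bar{\Delta}(m-1)$, which I would then establish by induction on $k$.

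Writing $\rho = \ell m + i^*$ with $m = (c+1)p^k - 1$, I first observe that Theorem~\ref{thm:simple-delta} makes $\bar{\Delta}(m)$ simple in each case: when $k=0$ we have $m = c \in \{1,\dots,p-1\}$, and when $k \geq 1$ we have $m = p\cdot(c+1)p^{k-1} - 1$ with $c+1 \in \{2,\dots,p\}$. Hence \eqref{eqn:1} collapses to
\[ 0 \longrightarrow L(\bistar) \otimes \bar{\Delta}(m-1)^F \longrightarrow \Delta(\rho) \longrightarrow L(\rho) \longrightarrow 0. \]
By the $q$-analogue of Steinberg's theorem (cf.~Lemma~\ref{lem:3.3}), the composition factors of the submodule are the simples $L(\ell n' + \bistar)$ as $\bar{L}(n')$ ranges over composition factors of $\bar{\Delta}(m-1)$. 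Since $L(\rho)\in I_c^{(k)}$, identifying composition factors of $\Delta(\rho)$ in $I_{c-1}^{(k)}$ is equivalent to identifying which $\bar{L}(n')$ have $n'$ in the range $R_k := \{(c-1)p^k, \dots, cp^k - 1\}$.

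It thus suffices to prove the classical claim that $\bar{\Delta}((c+1)p^k - 2)$ has a unique composition factor in $R_k$, namely $\bar{L}((c-1)p^k)$; I would do this by induction on $k$. The base case $k = 0$ is immediate since $\bar{\Delta}(c-1)=\bar{L}(c-1)$. For $k \geq 1$, I would decompose $(c+1)p^k - 2 = p\,((c+1)p^{k-1} - 1) + (p-2)$ and apply the classical version of \eqref{eqn:1} with $j = p-2$; since then $\bar{j} = 0$ and $\bar{\Delta}((c+1)p^{k-1}-1)$ is simple by the classical analogue of Theorem~\ref{thm:simple-delta}, the sequence collapses to
\[ 0 \longrightarrow \bar{\Delta}((c+1)p^{k-1} - 2)^F \longrightarrow \bar{\Delta}((c+1)p^k - 2) \longrightarrow \bar{L}((c+1)p^k - 2) \longrightarrow 0. \]
The top lies outside $R_k$ (as $(c+1)p^k - 2 \geq cp^k$), while composition factors of the Frobenius-twisted submodule are $\bar{L}(pn'')$ with $pn'' \in R_k$ if and only if $n'' \in R_{k-1}$. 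Applying the inductive hypothesis to $\bar{\Delta}((c+1)p^{k-1} - 2)$ singles out the unique $n'' = (c-1)p^{k-1}$, giving the unique factor $\bar{L}((c-1)p^k)$ in $R_k$. Combined with the formula $\sigma\rho = \ell(c-1)p^k + \bistar$ from Definition~\ref{def:sigma}, this yields $L(\sigma\rho)$ as the unique composition factor of $\Delta(\rho)$ with highest weight in $I_{c-1}^{(k)}$.

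The main obstacle is merely ensuring that the standard modules appearing at each stage of the recursion are actually simple, so that the short exact sequences collapse to the clean two-term form above; this is exactly what Theorem~\ref{thm:simple-delta} and its classical analogue supply. The rest is bookkeeping of Frobenius-twisted ranges $R_k \leftrightarrow R_{k-1}$ under multiplication by $p$.
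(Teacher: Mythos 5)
Your proof is correct and takes essentially the same approach as the paper: both apply \eqref{eqn:1} to reduce to the classical module $\bar{\Delta}((c+1)p^k-2)$ and then exploit the collapse of \eqref{eqn:1} into a two-term sequence once the quotient factor is recognised as simple, the paper simply unrolling your induction into an explicit list of the $k+1$ composition factors $\mu_k, p\mu_{k-1},\dotsc,p^k\mu_0$ and testing membership in $I_{c-1}^{(k)}$ by a direct inequality. (One small misattribution: the step $L(\bistar)\otimes\bar{L}(n')^F\cong L(\ell n'+\bistar)$ is the $q$-analogue of Steinberg's tensor product theorem from \cite[\textsection 3.2 (5)]{D}, not Lemma~\ref{lem:3.3}.)
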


\begin{proof}
We have $\rho = \ell s+i^*$ with $s=(c+1)p^k-1$. From \eqref{eqn:1}, we have the exact sequence
\[ 0\to L(\bar{i^*})\otimes \bar{\Delta}(s-1)^F\to \Delta(\rho) \to L(i^*)\otimes \bar{\Delta}(s)^F\to 0. \]
The last term of the sequence is the simple module $L(\rho)$, by Steinberg's tensor product theorem. Letting $\mu_t = (c+1)p^t-2$ for $0\leq t\leq k$, then the first term of the exact sequence
is $L(\bar{i^*}) \otimes \bar{\Delta}(\mu_k)^F$.

Moreover, for any $t\ge 1$ the classical version of \eqref{eqn:1} gives the exact sequence
\[ 0\to \bar{\Delta}(\mu_{t-1})^F  \to \bar{\Delta}(\mu_t) \to \bar{L}(p-2)\otimes \bar{\Delta}((c+1)p^{t-1}-1)^F \to 0 \]
where the last term is isomorphic to $\bar{L}(\mu_t)$. 
Furthermore, we have $\bar{\Delta}(\mu_0) = \bar{L}(c-1)$. 
This shows that the composition factors of $\bar{\Delta}(\mu_k)$ have highest weights
\[ \mu_k, \ \ p\mu_{k-1}, \ \  p^2\mu_{k-2}, \ \ \ldots, \ \ p^k\mu_0. \]
Therefore the composition factors of $L(\bar{i^*})\otimes \bar{\Delta}(s-1)^F$ have highest weights
$\ell p^t\mu_{k-t} + \bar{i^*}$ for $0\leq t\leq k$. 

Now observe that $p^t\mu_{k-t} = (c+1)p^k - 2p^t = cp^k+ (p^k-2p^t)$, and $p^k-2p^t\geq 0$ if and only if $t\leq k-1$. Also, $p^k\mu_0 = p^k(c-1)$. 
Hence $\ell p^t\mu_{k-t} + \bar{i^*}$ belongs 
to $I_c^{(k)}$ if $t\leq k-1$, and to $I_{c-1}^{(k)}$ if $t=k$. 
This shows that $\sigma(\rho)=\ell p^k\mu_0 + \bar{i^*}$ is the only highest weight of a composition factor of $\Delta(\rho)$ which is in $I_{c-1}^{(k)}$.
\end{proof}

\begin{lem}\label{lem:3.11}
	Let $\lambda$ be in $B_{ap^k}$, and assume $\lambda\not\in I_0^{(k)}$. Then $T(\lambda)$ has precisely two $\Delta$-quotients if and only if $\lambda$ is the smallest weight in $I_c^{(k)}$ for some $c\geq 1$.
\end{lem}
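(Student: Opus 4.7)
The plan is to apply the twisted tensor product decomposition \eqref{eqn:5} and reduce the counting of $\Delta$-quotients of $T(\lambda)$ to a question about $\bar{\Delta}$-quotients of a tilting module for $\bar{G}$. Write $\lambda = \ell m + i^*$ with $0 \le i^* \le \ell - 2$. The hypothesis $\lambda \in I_c^{(k)}$ with $c \ge 1$ forces $m \ge p^k \ge 1$, so \eqref{eqn:5} gives
\[ T(\lambda) \cong T(\ell + i^*) \otimes \bar{T}(m-1)^F, \]
and the classical case is identical upon replacing $\ell$ by $p$.

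The first step is to count $\Delta$-quotients of $T(\lambda)$. Let $s$ denote the number of $\bar{\Delta}$-quotients of $\bar{T}(m-1)$. A $\bar{\Delta}$-filtration of $\bar{T}(m-1)$, Frobenius-twisted and then tensored with $T(\ell + i^*)$, produces a filtration of $T(\lambda)$ whose $s$ sections each have the form $T(\ell + i^*) \otimes \bar{\Delta}(n)^F$ with $n \ge 0$. By \eqref{eqn:2}, each such section admits a two-step $\Delta$-filtration (with quotients $\Delta(\ell(n+1) + i^*)$ and $\Delta(\ell n + \bistar)$). Refining gives a $\Delta$-filtration of $T(\lambda)$ of length $2s$, so $T(\lambda)$ has precisely two $\Delta$-quotients if and only if $s = 1$, equivalently $\bar{T}(m-1) = \bar{\Delta}(m-1)$. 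For the $GL_2$-type Schur algebras in play, the dimension equality $\dim \bar{\Delta}(n) = \dim \bar{\nabla}(n) = n+1$ forces a tilting standard module to coincide with its costandard (via the surjection $\bar{\Delta}(n) \twoheadrightarrow \bar{\nabla}(n)$ at the top of any $\bar{\nabla}$-filtration) and hence with its simple head, so the condition reduces further to $\bar{\Delta}(m-1)$ being simple.

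The second step is combinatorial. By the classical analogue of Theorem~\ref{thm:simple-delta}, $\bar{\Delta}(m-1)$ is simple iff $m \in \{1, \ldots, p\} \cup \{bp^j : 2 \le b \le p,\ j \ge 1\}$. Writing $m = c p^k + m_0$ with $0 \le m_0 \le p^k - 1$ and $1 \le c \le a - 1 \le p - 1$, I would verify by elementary bounds that this condition holds iff $m_0 = 0$: when $m_0 = 0$, $m = cp^k$ is admissible (take $b = c, j = k$ if $c \ge 2$; $b = p, j = k-1$ if $c = 1$ and $k \ge 1$; or $m \in \{1, \ldots, p\}$ directly when $k = 0$); when $m_0 \ge 1$ we have $k \ge 1$ and $p^k < m < p^{k+1}$, and the only candidates $bp^j$ in this range are $bp^k$ with $b \in \{2, \ldots, p-1\}$, forcing $m_0 = (b-c)p^k$, which is inconsistent with $0 < m_0 < p^k$. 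Since the smallest weight of $I_c^{(k)}$ is exactly $\lambda = \ell(cp^k) + i^*$, this yields the desired equivalence.

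\textbf{Main obstacle.} The main conceptual hurdle is the reduction from ``$\bar{T}(m-1) = \bar{\Delta}(m-1)$'' to ``$\bar{\Delta}(m-1)$ is simple'', which relies on the $GL_2$-specific equality of standard and costandard dimensions. The remaining combinatorial verification is routine.
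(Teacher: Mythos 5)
Your proof is correct and follows essentially the same route as the paper: factor $T(\lambda) \cong T(\ell+i^*)\otimes\bar{T}(m-1)^F$ via \eqref{eqn:5}, use \eqref{eqn:2} to see that $T(\lambda)$ has twice as many $\Delta$-quotients as $\bar{T}(m-1)$ has $\bar\Delta$-quotients, and reduce to the condition that $\bar\Delta(m-1)$ is simple. The only divergence is the final step: the paper reads off $m=cp^k$ directly from the socle formula in Lemma~\ref{lem:c-socle} (a simple Weyl module in the relevant range must have $\bar\sigma(m-1)=m-1$), whereas you invoke the classical analogue of Theorem~\ref{thm:simple-delta} and do a short $p$-adic verification; both are valid, with the socle route being marginally quicker. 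One small remark: the equality $\dim\bar\Delta(n)=\dim\bar\nabla(n)$ that you use is not specific to $GL_2$ but holds in any quasi-hereditary setting with a contravariant duality (since $\bar\Delta(n)^\circ\cong\bar\nabla(n)$), so the reduction from ``tilting Weyl module'' to ``simple'' is actually cleaner and more general than your phrasing suggests.
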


\begin{proof}
	Let $\lambda=\ell m+i^*$. By \eqref{eqn:5}, we have that $T(\lambda)\cong T(\ell+i^*)\otimes\bar{T}(m-1)^F$.
	By \eqref{eqn:2}, modules of the form $T(\ell+i^*)\otimes\bar{\Delta}(n)^F$ each have two $\Delta$-quotients. Thus if $T(\lambda)$ has precisely two $\Delta$-quotients, then $\bar{T}(m-1)$ has precisely one $\bar{\Delta}$-quotient and so is simple. Therefore $m-1=cp^k-1$ for some $1\le c\le p-1$, and hence $\lambda=\ell cp^k+i^*$ which is the smallest weight in $I_c^{(k)}$. The converse is clear.
\end{proof}

\bigskip

\section{The decomposition matrix and the tilting matrix of a block}\label{sec:4}

Throughout Section~\ref{sec:4}, fix $B$ a primitive block of a Schur algebra of size $ap^k$, where $a\in\{2,3,\dotsc,p\}$ and $k\in\mathbb{N}_0$. Let its lowest weight be $i\in\{0,1,\dotsc,\ell-2\}$ and let $\bistar=\ell-2-i^*$, replacing $\ell$ by $p$ in the classical case. Let $c\in\{1,2,\dotsc,a-1\}$.

\medskip

The decomposition matrix of $B$ is defined to be the $ap^k\times ap^k$ matrix indexed by weights $\lambda,\mu\in B$ whose $(\lambda,\mu)$-entry is the decomposition number $[\Delta(\lambda):L(\mu)]$. Here we order the weights $\lambda,\mu$ 
by the natural order on integers. By the tilting matrix of $B$, we mean the $ap^k\times ap^k$ matrix indexed in the same way whose $(\lambda,\mu)$-entry is $[T(\lambda):\Delta(\mu)]$. We refer to these quantities as tilting multiplicities.

While the classical decomposition numbers $[\bar{\Delta}(\lambda):\bar{L}(\mu)]$ are known in terms of the $p$-adic expansions of $\lambda$ and $\mu$ (see \cite[Theorem 2.1]{H}), 
their values can be calculated by straightforward induction on the short exact sequences and isomorphisms introduced in Section~\ref{sec:3.2}. The same is true of the tilting multiplicities 
(see \cite[Lemma 6]{EH} and \cite[\textsection 3.4 (3)]{D}), but we remark that in proving the following propositions we do not need to use the explicit formulas: it will turn out to be natural to prove these combinatorial patterns by induction using only \eqref{eqn:1}--\eqref{eqn:5}, which is the approach we take. 

We illustrate these patterns for $p=5$ in Figures~\ref{fig:decomp} and~\ref{fig:tilting} below.
In both figures, the part of the matrix shown corresponds to those rows and columns indexed by $I_0^{(2)}$, and the five subdivisions indicate the intervals $I_0^{(1)},\dotsc, I_4^{(1)}$. As evident in these figures, we remark that $[\Delta(\lambda):L(\mu)]$ and $[T(\lambda):\Delta(\mu)]\in\{0,1\}$. 

We give an example of such inductive arguments using the short exact sequences in the proof of the following result.

\begin{figure}[h]
	\begin{tiny}
		\setlength\arraycolsep{2pt}
		\[
		\begin{array}{r|*{5}c|*{5}c|*{5}c|*{5}c|*{5}c}
		\hline
		0& 1 &&&& &&&&& &&&&& &&&&& &&&&&\\
		1& 1&1 &&& &&&&& &&&&& &&&&& &&&&&\\
		2& .&1&1 && &&&&& &&&&& &&&&& &&&&&\\
		3& .&.&1&1 & &&&&& &&&&& &&&&& &&&&&\\
		4& .&.&.&1&1 &&&&& &&&&& &&&&& &&&&&\\
		\hline
		5& .&.&.&1&1& 1&&&& &&&&& &&&&& &&&&&\\
		6& .&.&1&1&.& 1&1&&& &&&&& &&&&& &&&&&\\
		7& .&1&1&.&.& .&1&1&& &&&&& &&&&& &&&&&\\
		8& 1&1&.&.&.& .&.&1&1& &&&&& &&&&& &&&&&\\
		9& 1&.&.&.&.& .&.&.&1&1 &&&&& &&&&& &&&&&\\
		\hline
		10& .&.&.&.&.& .&.&.&1&1& 1&&&& &&&&& &&&&&\\
		11& .&.&.&.&.& .&.&1&1&.& 1&1&&& &&&&& &&&&&\\
		12& .&.&.&.&.& .&1&1&.&.& .&1&1&& &&&&& &&&&&\\
		13& .&.&.&.&.& 1&1&.&.&.& .&.&1&1& &&&&& &&&&&\\
		14& .&.&.&.&.& 1&.&.&.&.& .&.&.&1&1 &&&&& &&&&&\\
		\hline
		15& .&.&.&.&.& .&.&.&.&.& .&.&.&1&1& 1&&&& &&&&&\\
		16& .&.&.&.&.& .&.&.&.&.& .&.&1&1&.& 1&1&&& &&&&&\\
		17& .&.&.&.&.& .&.&.&.&.& .&1&1&.&.& .&1&1&& &&&&&\\
		18& .&.&.&.&.& .&.&.&.&.& 1&1&.&.&.& .&.&1&1& &&&&&\\
		19& .&.&.&.&.& .&.&.&.&.& 1&.&.&.&.& .&.&.&1&1 &&&&&\\
		\hline
		20& .&.&.&.&.& .&.&.&.&.& .&.&.&.&.& .&.&.&1&1& 1&&&&\\
		21& .&.&.&.&.& .&.&.&.&.& .&.&.&.&.& .&.&1&1&.& 1&1&&&\\
		22& .&.&.&.&.& .&.&.&.&.& .&.&.&.&.& .&1&1&.&.& .&1&1&&\\
		23& .&.&.&.&.& .&.&.&.&.& .&.&.&.&.& 1&1&.&.&.& .&.&1&1&\\
		24& .&.&.&.&.& .&.&.&.&.& .&.&.&.&.& 1&.&.&.&.& .&.&.&1&1\\
		\end{array} \]
		\begin{center}
			$\vdots$
		\end{center}
	\end{tiny}
	\caption{Decomposition matrix for $p=5$. The $(m,n)$-entry denotes $[\Delta(\lambda):L(\mu)]$ where $\lambda=\ell m+i_1^*$, $\mu=\ell n+i_2^*$ in the quantum case, or $[\bar{\Delta}(\lambda):\bar{L}(\mu)]$ where $\lambda=pm+i_1^*$, $\mu=pn+i_2^*$ in the classical case.}\label{fig:decomp}
%
\vspace{40pt}
	\begin{tiny}
		\setlength\arraycolsep{2pt}
		\[
		\begin{array}{r|*{5}c|*{5}c|*{5}c|*{5}c|*{5}c}
		\hline
		0& 1 &&&& &&&&& &&&&& &&&&& &&&&&\\
		1& 1&1 &&& &&&&& &&&&& &&&&& &&&&&\\
		2& .&1&1 && &&&&& &&&&& &&&&& &&&&&\\
		3& .&.&1&1 & &&&&& &&&&& &&&&& &&&&&\\
		4& .&.&.&1&1 &&&&& &&&&& &&&&& &&&&&\\
		\hline
		5& .&.&.&.&1&1 &&&& &&&&& &&&&& &&&&&\\
		6& .&.&.&1&1&1&1 &&& &&&&& &&&&& &&&&&\\
		7& .&.&1&1&.&.&1&1 && &&&&& &&&&& &&&&&\\
		8& .&1&1&.&.&.&.&1&1 & &&&&& &&&&& &&&&&\\
		9& 1&1&.&.&.&.&.&.&1&1 &&&&& &&&&& &&&&&\\
		\hline
		10& .&.&.&.&.& .&.&.&.&1&1 &&&& &&&&& &&&&&\\
		11& .&.&.&.&.& .&.&.&1&1&1&1 &&& &&&&& &&&&&\\
		12& .&.&.&.&.& .&.&1&1&.&.&1&1 && &&&&& &&&&&\\
		13& .&.&.&.&.& .&1&1&.&.&.&.&1&1 & &&&&& &&&&&\\
		14& .&.&.&.&.& 1&1&.&.&.&.&.&.&1&1 &&&&& &&&&&\\
		\hline
		15& .&.&.&.&.& .&.&.&.&.& .&.&.&.&1&1 &&&& &&&&&\\
		16& .&.&.&.&.& .&.&.&.&.& .&.&.&1&1&1&1 &&& &&&&&\\
		17& .&.&.&.&.& .&.&.&.&.& .&.&1&1&.&.&1&1 && &&&&&\\
		18& .&.&.&.&.& .&.&.&.&.& .&1&1&.&.&.&.&1&1 & &&&&&\\
		19& .&.&.&.&.& .&.&.&.&.& 1&1&.&.&.&.&.&.&1&1 &&&&&\\
		\hline
		20& .&.&.&.&.& .&.&.&.&.& .&.&.&.&.& .&.&.&.&1&1 &&&&\\
		21& .&.&.&.&.& .&.&.&.&.& .&.&.&.&.& .&.&.&1&1&1&1 &&&\\
		22& .&.&.&.&.& .&.&.&.&.& .&.&.&.&.& .&.&1&1&.&.&1&1 &&\\
		23& .&.&.&.&.& .&.&.&.&.& .&.&.&.&.& .&1&1&.&.&.&.&1&1 &\\
		24& .&.&.&.&.& .&.&.&.&.& .&.&.&.&.& 1&1&.&.&.&.&.&.&1&1\\
		\end{array} \]
		\begin{center}
			$\vdots$
		\end{center}
	\end{tiny}
	\caption{Tilting matrix for $p=5$. The $(m,n)$-entry denotes $[T(\lambda):\Delta(\mu)]$ where $\lambda=\ell m+i_1^*$, $\mu=\ell n+i_2^*$ in the quantum case, or $[\bar{T}(\lambda):\bar{\Delta}(\mu)]$ where $\lambda=pm+i_1^*$, $\mu=pn+i_2^*$ in the classical case.}\label{fig:tilting}
\end{figure}

\begin{lem}\label{lem:interval-c}
	Let $\rho\in I_c^{(k)}$ and $\gamma\in B$. If $[\Delta(\rho):L(\gamma)]>0$, then $\gamma\in I_{c-1}^{(k)}\sqcup I_c^{(k)}$.
\end{lem}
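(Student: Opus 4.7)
The plan is to apply the short exact sequence~\eqref{eqn:1} to $\Delta(\rho)$ and reduce the question to composition factors of the classical standard modules $\bar\Delta(m)$ and $\bar\Delta(m-1)$, then prove the analogous classical statement by induction on $k$.

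Write $\rho=\ell m+i^*$ with $m=cp^k+m_0$ and $0\le m_0\le p^k-1$. Since $0\le i^*,\bistar\le\ell-2$, both $\Delta(i^*)$ and $\Delta(\bistar)$ are simple by Theorem~\ref{thm:simple-delta}, so \eqref{eqn:1} together with Steinberg's tensor product theorem shows that every composition factor of $\Delta(\rho)$ has either the form $L(\ell n+i^*)$ for some composition factor $\bar L(n)$ of $\bar\Delta(m)$, or $L(\ell n'+\bistar)$ for some composition factor $\bar L(n')$ of $\bar\Delta(m-1)$. Translating the desired conclusion $\gamma\in I_{c-1}^{(k)}\sqcup I_c^{(k)}$ into the index range $[(c-1)p^k,(c+1)p^k)$, it therefore suffices to show that the classical indices $n$ and $n'$ lie in this interval.

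For this I would prove the classical analog of the lemma by induction on $k$: for every $\mu\in\bar I_c^{(k)}$ with $c\ge 1$, each composition factor $\bar L(n)$ of $\bar\Delta(\mu)$ satisfies $n\in\bar I_{c-1}^{(k)}\sqcup\bar I_c^{(k)}$. The base case $k=0$ is immediate since $\bar\Delta(c)=\bar L(c)$ is simple. For the inductive step, write $\mu=pn+j$ with $0\le j\le p-1$: when $j=p-1$, the classical analog of \eqref{eqn:3} gives $\bar\Delta(\mu)=\bar L(p-1)\otimes\bar\Delta(n)^F$; when $j\le p-2$, the classical analog of \eqref{eqn:1} expresses composition factors of $\bar\Delta(\mu)$ through those of $\bar\Delta(n)$ and $\bar\Delta(n-1)$. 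In both cases $n\in\bar I_c^{(k-1)}$, so the inductive hypothesis places the indices $n',n''$ of composition factors of $\bar\Delta(n)$ and $\bar\Delta(n-1)$ in $[(c-1)p^{k-1},(c+1)p^{k-1})$, and an elementary arithmetic check then places $pn'+j$ and $pn''+\bar j$ in $[(c-1)p^k,(c+1)p^k)$.

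Two boundary cases require care. First, in the reduction from the quantum to the classical setting, if $m_0=0$ then $m-1=cp^k-1$ sits at the top of $\bar I_{c-1}^{(k)}$; by Theorem~\ref{thm:simple-delta}, $\bar\Delta(cp^k-1)$ is simple (directly for $c\ge 2$, and via the identity $p^k-1=p\cdot p^{k-1}-1$ for $c=1$), contributing only the composition factor $L(\ell(cp^k-1)+\bistar)\in I_{c-1}^{(k)}$. Second, the analogous boundary case $n=cp^{k-1}$ in the classical induction is handled by the same simplicity result applied to $\bar\Delta(cp^{k-1}-1)$. The main obstacle is precisely this bookkeeping around interval boundaries and the careful tracking of indices under the Frobenius twist; the underlying mechanism, iteration of \eqref{eqn:1}, is straightforward but the several subcases demand attention.
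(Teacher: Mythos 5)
Your proposal is correct and takes essentially the same approach as the paper: both reduce to the classical decomposition numbers via the short exact sequence~\eqref{eqn:1} and Steinberg's tensor product theorem, and then induct on $k$, using Theorem~\ref{thm:simple-delta} to handle boundary weights of the form $cp^j-1$. The only organizational difference is that your classical induction peels off a single $p$-adic digit at a time (splitting on $j=p-1$ versus $j\le p-2$), whereas the paper's proof peels off all trailing digits equal to $p-1$ in one step via the description in Section~\ref{sec:intervals}(ii)(b) and applies the inductive hypothesis directly to a primitive classical block of size $ap^{k-1-t}$; both variants are sound.
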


\begin{proof}
	We proceed by induction on $k$. When $k=0$, the claim follows immediately from \eqref{eqn:1} and Theorem~\ref{thm:simple-delta} since $c\ge 1$. Now suppose $k\ge 1$. For the inductive step, suppose $\rho=\ell n+i^*\in I_c^{(k)}$, so $cp^k\le n<(c+1)p^k$. By \eqref{eqn:1}, the composition factors of $\Delta(\rho)$ are precisely those of $\bar{\Delta}(n-1)^F\otimes L(\bistar)$ and $\bar{\Delta}(n)^F\otimes L(i^*)$, namely
	\[ \begin{array}{llll}
	L(\ell u+\bistar) & \text{s.t.} & [\bar{\Delta}(n-1):\bar{L}(u)]>0, & \text{and}\\
	L(\ell v+i^*) & \text{s.t.} & [\bar{\Delta}(n):\bar{L}(v)]>0. &
	\end{array} \]
	First consider $L(\ell v+i^*)$. If $n\ne (c+1)p^k-1$, then as described in Section~\ref{sec:intervals} (ii), $n=p^t-1+p^t(n_1+cp^{k-t})$ for some $0\le t<k$ with $n_1+cp^{k-t}\in \bar{I}_c^{(x)}$ where $x=k-1-t$. By the inductive hypothesis, $[\bar{\Delta}(n):\bar{L}(v)]>0$ implies that $v=p^t-1+p^tw$ for some $w\in \bar{I}_{c-1}^{(x)}\sqcup \bar{I}_c^{(x)}$. Hence $\ell v+i^*\in I_{c-1}^{(k)}\sqcup I_c^{(k)}$ as $(c-1)p^k\le v<(c+1)p^k$.
	
	If $n=(c+1)p^k-1$ then $\bar{\sigma}(n)=n$, so $\bar{\Delta}(n)$ is simple by Lemma~\ref{lem:q-socle}. Thus $[\bar{\Delta}(n):\bar{L}(v)]>0$ implies $v=n$, and so $\ell v+i^*=\rho\in I_c^{(k)}$.
	
	\medskip
	
	Now consider $L(\ell u+\bistar)$. If $n\ne cp^k$ then $\ell(n-1)+\bistar\in I_c^{(k)}$ is not the maximal weight in this interval, so by a similar argument to the above, $[\bar{\Delta}(n-1):\bar{L}(u)]>0$ implies $\ell u+\bistar\in I_{c-1}^{(k)}\sqcup I_c^{(k)}$.
	If $n=cp^k$, then $\bar{\sigma}(n-1)=n-1$, so $\bar{\Delta}(n-1)$ is simple by Lemma~\ref{lem:q-socle}. Thus $[\bar{\Delta}(n-1):\bar{L}(u)]>0$ implies $u=n-1$, and so $\ell u+\bistar=\ell(cp^k-1)+\bistar\in I_{c-1}^{(k)}$.
\end{proof}

We now proceed with the main results of this section, which relate the decomposition numbers of $B$ to the tilting multiplicities of $B$.

\begin{prop}\label{prop:4.1}
	Assume $\lambda\in I_c^{(k)}$. Let $\rho$ be a weight in $B$. Then
	\[ [T(\lambda):\Delta(\rho)] = [\Delta(\rho):L(\sigma\lambda)] . \]
\end{prop}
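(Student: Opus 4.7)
The plan is to prove the identity by induction on $k$, treating the quantum and classical Schur algebras uniformly (the classical statement is obtained by replacing $\ell$ with $p$ throughout). The key point is that equation \eqref{eqn:5} writes $T(\lambda) = T(\ell + i^*) \otimes \bar T(m-1)^F$ for $\lambda = \ell m + i^*$, so each side of the claimed equality can be reduced to a classical quantity about $\bar T(m-1)$ and $\bar\Delta(m-1)$, to which the inductive hypothesis applies.

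For the base case $k = 0$, the interval $I_c^{(0)} = \{\ell c + i^*\}$ is a singleton. Since $c \leq a-1 \leq p-1$, the classical tilting module $\bar T(c-1)$ is simple and equals $\bar\Delta(c-1) = \bar L(c-1)$; combining \eqref{eqn:5} with \eqref{eqn:2} (taking $n = c$) exhibits $T(\lambda)$ as an extension of $\Delta(\sigma\lambda)$ by $\Delta(\lambda)$. Direct verification, using Lemma~\ref{lem:q-socle} to identify $\soc\Delta(\lambda) = L(\sigma\lambda)$, yields the identity for $\rho \in \{\sigma\lambda,\lambda\}$, and Lemma~\ref{lem:interval-c} forces both sides to vanish for any other $\rho$ (since $|I_{c-1}^{(0)}| = |I_c^{(0)}| = 1$).

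For the inductive step ($k \geq 1$), write $m = cp^k + m_0$. Applying \eqref{eqn:5} and tensoring a $\bar\Delta$-filtration of $\bar T(m-1)$ with $T(\ell+i^*)$, each resulting factor $T(\ell+i^*) \otimes \bar\Delta(\alpha)^F$ splits via \eqref{eqn:2} into standard quotients $\Delta(\ell(\alpha+1)+i^*)$ and $\Delta(\ell\alpha+\bar i^*)$. This yields
\[ [T(\lambda):\Delta(\rho)] = [\bar T(m-1):\bar\Delta(t)], \]
where $t = n-1$ or $t = n$ according as the residue of $\rho = \ell n + j^*$ matches $i^*$ or $\bar i^*$. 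A parallel computation using \eqref{eqn:1} together with Steinberg's tensor product theorem gives
\[ [\Delta(\rho):L(\sigma\lambda)] = [\bar\Delta(t):\bar L(\bar\sigma(m-1))] \]
for the same $t$. So the identity reduces to the classical equality $[\bar T(m-1):\bar\Delta(t)] = [\bar\Delta(t):\bar L(\bar\sigma(m-1))]$.

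When $m_0 \geq 1$, the weight $m-1$ lies — possibly after the Morita-equivalence identification of Section~\ref{sec:intervals}~(ii)(b) — in a primitive classical block with $m-1$ sitting in an interval $\bar I_c^{(k')}$ at some level $k' < k$, so the inductive hypothesis delivers the required equality, and both sides vanish trivially when $t$ lies outside the classical block of $m-1$. When $m_0 = 0$, equation \eqref{eqn:5} makes $T(\lambda)$ an extension of $\Delta(\sigma\lambda)$ by $\Delta(\lambda)$ just as in the base case (noting $\bar T(cp^k-1) = \bar\Delta(cp^k-1)$ by Theorem~\ref{thm:simple-delta}); the identity is immediate for $\rho \in \{\lambda,\sigma\lambda\}$, and for $\rho$ outside $I_{c-1}^{(k)} \cup I_c^{(k)}$ both sides vanish by Lemma~\ref{lem:interval-c} or by dominance. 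The delicate remaining subcase is $\rho \in I_c^{(k)} \setminus \{\lambda\}$, which requires the auxiliary claim that $[\bar\Delta(s):\bar L(cp^k-1)] = 0$ for every $s \in [cp^k,(c+1)p^k-1]$; I would prove this by a short secondary induction on $k$ via \eqref{eqn:1} (since the $p$-adic residue of $cp^k-1$ is $p-1$, any $s = ps'+j$ with $j \neq p-1$ contributes nothing) and \eqref{eqn:3} (when $j = p-1$ one reduces to the same claim at level $k-1$). The principal obstacle is this auxiliary claim in the boundary case, together with the bookkeeping of Morita equivalences in the interior case.
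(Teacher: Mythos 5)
Your proposal follows the same overall strategy as the paper: induct on $k$, use \eqref{eqn:5} to factor $T(\lambda) \cong T(\ell+i^*)\otimes \bar T(m-1)^F$, use \eqref{eqn:2} and \eqref{eqn:1} (with Steinberg) to reduce both sides to classical quantities in $\bar T(m-1)$ and $\bar\Delta$, and invoke the (classical) inductive hypothesis, with Morita bookkeeping via Section~\ref{sec:intervals}(ii). Your base case $k=0$ is a direct verification rather than the paper's appeal to the explicit bidiagonal matrix, but they are equivalent.

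The one place where you diverge in substance is the boundary case $m_0 = 0$. The paper's proof is silent about it, simply attributing the classical step $[\bar T(m-1):\bar\Delta(v)] = [\bar\Delta(v):\bar L(\bar\sigma(m-1))]$ to ``the inductive hypothesis.'' You are right to flag that when $m_0=0$ one has $m-1 = cp^k-1$, $\bar\sigma(m-1)=m-1$, and $\bar T(m-1)=\bar\Delta(m-1)$ is simple; after iterating the Morita equivalence of Section~\ref{sec:intervals}(ii)(b), this weight lands on $c-1 \in \bar I_0^{(0)}$, which is outside the scope ($\tilde c \geq 1$) of the classical version of the proposition, so the literal form of the IH does not cover this case. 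What is really needed there is precisely your auxiliary claim: $[\bar\Delta(s):\bar L(cp^k-1)]=0$ for $s \in [cp^k, (c+1)p^k-1]$. Your proposed secondary induction via \eqref{eqn:1} (residues $\neq p-1$ change blocks, so contribute zero) and \eqref{eqn:3} (residue $p-1$ reduces the exponent by one) is correct, with base case $k=1$ reducing to $[\bar\Delta(c):\bar L(c-1)]=0$, which holds because $c$ and $c-1$ lie in different classical blocks. So your proof is sound; it is slightly more verbose than the paper's but explicitly supplies a detail the paper leaves to the reader. One small omission: in the $m_0=0$ case you should also note that $\rho \in I_{c-1}^{(k)}\setminus\{\sigma\lambda\}$ is handled by dominance (since $\sigma\lambda$ is the largest weight of $I_{c-1}^{(k)}$ when $m_0=0$), though this is immediate.
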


The proof of the proposition is postponed to Section~\ref{sec:proof4.1}. We first describe some properties of decomposition numbers and tilting multiplicities that we can deduce from this.

\begin{rem}\label{rem:4.2}
	\begin{itemize}
		\item[(a)] If this multiplicity is positive, then $\sigma\lambda\le\rho\le\lambda$.
		\item[(b)] Moreover, Proposition~\ref{prop:4.1} shows that the column of the decomposition matrix corresponding to $L(\sigma\lambda)$ is the same as the row in the tilting matrix corresponding to $T(\lambda)$. In particular, the number of non-zero entries in the column for $L(\sigma\lambda)$ is a power of $2$, since it is the total number of $\Delta$-quotients of $T(\lambda)$. 
	\end{itemize}
\end{rem}

\begin{cor}\label{cor:4.3}
	If $\lambda\in I_c^{(k)}$, then $T(\lambda)\cong P(\sigma\lambda)$, the indecomposable projective $B$--module with simple top $L(\sigma\lambda)$.
\end{cor}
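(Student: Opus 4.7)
The plan is to match $T(\lambda)$ with $P(\sigma\lambda)$ by combining Proposition~\ref{prop:4.1} with BGG reciprocity and the simple-preserving duality on Schur algebras (Section~\ref{sec:3.1}), reducing the problem to computing the top of $T(\lambda)$ and its dimension. Concretely, I would first identify $\modtop T(\lambda)$, then check equality of $\Delta$-filtration multiplicities of $T(\lambda)$ and $P(\sigma\lambda)$, and finally realise these together as an isomorphism.

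For the top: indecomposable tilting modules have simple top (see the discussion preceding Lemma~\ref{lem:3.3}), and since $T(\lambda)\cong T(\lambda)^\circ$ under the duality fixing simples, the socle is likewise simple. The defining inclusion $\Delta(\lambda)\hookrightarrow T(\lambda)$ arising from the short exact sequence $0\to\Delta(\lambda)\to T(\lambda)\to X(\lambda)\to 0$, together with Lemma~\ref{lem:q-socle}(b), forces $L(\sigma\lambda)=\soc\Delta(\lambda)\hookrightarrow\soc T(\lambda)$, and hence $\soc T(\lambda)=L(\sigma\lambda)$ by simplicity. Dualising gives $\modtop T(\lambda)=L(\sigma\lambda)$.

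For the multiplicities, BGG reciprocity yields $[P(\sigma\lambda):\Delta(\rho)]=[\nabla(\rho):L(\sigma\lambda)]$, and since $\Delta(\rho)^\circ\cong\nabla(\rho)$ with the duality fixing each simple, the right-hand side equals $[\Delta(\rho):L(\sigma\lambda)]$. Combining with Proposition~\ref{prop:4.1} gives
\[ [T(\lambda):\Delta(\rho)] = [\Delta(\rho):L(\sigma\lambda)] = [P(\sigma\lambda):\Delta(\rho)] \]
for every $\rho\in B$, and summing $[M:\Delta(\rho)]\dim\Delta(\rho)$ over $\rho$ (valid for any $M\in\mathscr{F}(\Delta)$) forces $\dim T(\lambda)=\dim P(\sigma\lambda)$. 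The common top $L(\sigma\lambda)$ then supplies, via the projective cover property, a surjection $P(\sigma\lambda)\twoheadrightarrow T(\lambda)$ which is an isomorphism by the dimension equality. The main obstacle is the identification of $\modtop T(\lambda)$, but this reduces immediately to the known socle of $\Delta(\lambda)$ once the self-duality of $T(\lambda)$ is invoked.
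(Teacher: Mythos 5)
Your proposal is correct and follows essentially the same route as the paper: identify $\modtop T(\lambda)=\soc\Delta(\lambda)=L(\sigma\lambda)$ via the simple top of $T(\lambda)$, self-duality of tilting modules, and Lemma~\ref{lem:q-socle}, then combine Proposition~\ref{prop:4.1} with reciprocity (and the duality identification $\nabla(\rho)\cong\Delta(\rho)^\circ$) to match the $\Delta$-filtration multiplicities, and conclude that the surjection $P(\sigma\lambda)\twoheadrightarrow T(\lambda)$ is an isomorphism. You merely make explicit the dimension count that the paper leaves implicit in ``Hence $\pi$ is an isomorphism.''
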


\begin{proof}
	Note $\modtop T(\lambda)=\soc\Delta(\lambda) = L(\sigma\lambda)$, so there exists a surjective map $\pi:P(\sigma\lambda)\to T(\lambda)$. 
	But $[\Delta(\rho):L(\sigma\lambda)] = [P(\sigma\lambda):\Delta(\rho)]$ by reciprocity, so $P(\sigma\lambda)$ and $T(\lambda)$ have the same $\Delta$-quotients by Proposition~\ref{prop:4.1}. Hence $\pi$ is an isomorphism. 
\end{proof}

This corollary shows that $P(\mu)$ has simple socle $L(\mu)$ whenever $\mu\in B\setminus I_{a-1}$. Namely, if $\mu\in I_d$ with $0\le d\le a-1$, then $\mu = \sigma\lambda$ for $\lambda$ in $I_{d+1}$, and notice $T(\lambda)$ has simple socle and top isomorphic to that of $\Delta(\lambda)$, which is $L(\sigma\lambda)$ by Lemma~\ref{lem:q-socle}.

\begin{prop}\label{prop:4.4}
	Let $\mu \in I_{c-1}^{(k)}$ and let $\lambda,\rho\in I_c^{(k)}$. Then
	\begin{itemize}
		\item[(a)] $[\Delta(\rho):L(\mu)] =  [\Delta(\sigma\rho):L(\mu)]$, and
		\item[(b)] $[T(\sigma\lambda):\Delta(\sigma\rho)] = [\Delta(\rho):L(\lambda)]$.
	\end{itemize}
\end{prop}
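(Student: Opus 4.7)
The plan is to prove both parts by induction on $k$, simultaneously with their classical analogs, using the short exact sequences of Proposition~\ref{prop:tools}. The base case $k=0$ is immediate since each $I_c^{(0)}$ is a singleton: both statements reduce to $1 = 1$, verified using Lemma~\ref{lem:q-socle}, the quasi-hereditary axiom, and the fact that $T(\sigma\lambda)$ has simple top and socle $L(\sigma\lambda)$.

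For the inductive step of (a), write $\rho = \ell m + i^*$ and $\sigma\rho = \ell m' + \bar{i^*}$ with $m = cp^k + m_0$ and $m' = cp^k - 1 - m_0$. Applying \eqref{eqn:1} to both, the multiplicity of $L(\mu) = L(\ell M + j^*)$ in $\Delta(\rho)$ equals either $[\bar{\Delta}(m):\bar{L}(M)]$ or $[\bar{\Delta}(m-1):\bar{L}(M)]$, depending on whether $j^* = i^*$ or $j^* = \bar{i^*}$; analogously $[\Delta(\sigma\rho):L(\mu)]$ is $[\bar{\Delta}(m'-1):\bar{L}(M)]$ or $[\bar{\Delta}(m'):\bar{L}(M)]$. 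The identities $m'-1 = \bar{\sigma}(m)$ and $m' = \bar{\sigma}(m-1)$ arising from Definition~\ref{def:sigma} then reduce the desired identity in both cases to the classical analog of (a), which is proved by the same induction using the classical versions of \eqref{eqn:1}--\eqref{eqn:5} and the structural decomposition of Section~\ref{sec:intervals}(ii).

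For part (b), I carry out the analogous reduction: \eqref{eqn:5} gives $T(\sigma\lambda) \cong T(\ell + \bar{j^*}) \otimes \bar{T}(m'_\lambda - 1)^F$, and applying \eqref{eqn:2} to each $\bar{\Delta}$-quotient of $\bar{T}(m'_\lambda - 1)$ computes its $\Delta$-filtration. One obtains $[T(\sigma\lambda):\Delta(\sigma\rho)] = [\bar{T}(m'_\lambda - 1):\bar{\Delta}(m'_\rho - 1)]$ when $\rho$ and $\lambda$ share the same ``$i$''-part, and $[\bar{T}(m'_\lambda - 1):\bar{\Delta}(m'_\rho)]$ otherwise. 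Meanwhile, $[\Delta(\rho):L(\lambda)]$ reduces to $[\bar{\Delta}(m):\bar{L}(n)]$ or $[\bar{\Delta}(m-1):\bar{L}(n)]$ via \eqref{eqn:1}. The identifications $m'_\rho - 1 = \bar{\sigma}(m)$, $m'_\rho = \bar{\sigma}(m-1)$, $m'_\lambda - 1 = \bar{\sigma}(n)$ then match both sides with the classical analog of (b), closing the simultaneous induction.

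The main obstacle will be the careful handling of boundary cases where $m_0$ or $n_0$ equals $0$ or $p^k - 1$: in these situations the classical weights $m$ or $n$ fall at an interval boundary and the identifications $m' - 1 = \bar{\sigma}(m)$ etc.~must be verified using \eqref{eqn:3}, while the degeneration of $\bar{\Delta}$ via Theorem~\ref{thm:simple-delta} must be tracked. Combined with the treatment of imprimitive classical blocks in Section~\ref{sec:intervals}(ii)(b) that arise during the induction, this constitutes the most technical part of the argument.
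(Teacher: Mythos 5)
Your outline matches the paper's proof in all essentials: induction on $k$, using \eqref{eqn:1}, \eqref{eqn:2}, \eqref{eqn:5} and Steinberg's tensor product theorem to reduce the quantum multiplicities $[\Delta(\rho):L(\mu)]$, $[\Delta(\rho):L(\lambda)]$, $[T(\sigma\lambda):\Delta(\sigma\rho)]$ to classical decomposition and tilting numbers, and then using the identities $m'_\rho=\bar\sigma(m-1)$, $m'_\rho-1=\bar\sigma(m)$, $m'_\lambda-1=\bar\sigma(n)$ from Definition~\ref{def:sigma} to match both sides with a smaller instance of the classical statement. Your reductions and $\sigma/\bar\sigma$ bookkeeping for the generic (non-boundary) case are exactly what the paper does.

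The one place your plan diverges from the paper is in the treatment of the boundary cases, which you correctly flag as the technical crux but attribute to the wrong tools. You propose to verify the identifications at the boundary via \eqref{eqn:3} and Theorem~\ref{thm:simple-delta}; the paper instead handles them as follows. In part (a), when $\rho$ is maximal in $I_c^{(k)}$ the identity $\bar\sigma(s)=cp^k-s_0-2$ fails (since $s_0=p^k-1$ gives $s_0\notin\{-1,\dotsc,p^k-2\}$), and the paper invokes Lemma~\ref{lem:3.10} directly to see that $L(\sigma\rho)$ is the unique composition factor of $\Delta(\rho)$ with weight in $I_{c-1}^{(k)}$, so both sides are the indicator of $\mu=\sigma\rho$. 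In part (b), when $\lambda$ is maximal both sides reduce to the indicator of $\rho=\lambda$; and when $\lambda$ is not maximal but $i^*=i$ and $\rho$ is maximal, the paper uses that $\bar\sigma(s)=s$ and $\bar\sigma(n)=n$ force $\bar\Delta(s)$ and $\bar\Delta(n)$ to be simple (Lemma~\ref{lem:c-socle}), so both sides vanish. Equation \eqref{eqn:3} plays no role here, since it concerns weights $\equiv -1\pmod\ell$, which do not occur in a primitive block. Apart from this, your proposal is a faithful sketch of the paper's argument.
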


\begin{rem} \label{rem:4.5}
	Proposition~\ref{prop:4.4}(a) shows that the entries in the column corresponding to $L(\mu)$ in the decomposition matrix
	are symmetric about the horizontal line between $I_{c-1}^{(k)}$ and $I_c^{(k)}$ (see Figure~\ref{fig:decomp}, for example).
	Proposition~\ref{prop:4.4}(b) shows that $\sigma$ is a bijection from the part of the decomposition matrix indexed by weights in $I_c^{(k)}$ to the part of the tilting matrix indexed by weights in $I_{c-1}^{(k)}$ (see Figures~\ref{fig:decomp} and~\ref{fig:tilting}).
\end{rem}

The proof of Proposition~\ref{prop:4.4} is postponed to Section~\ref{sec:proof4.4}. We first use the results introduced thus far to show that blocks $B$ with $|B|$ not of the form $ap^k$ always have at least one diagonal Cartan number which is not a power of 2. By Remark~\ref{rem:cartan}, such a block therefore cannot be Ringel self-dual, as each diagonal Cartan number for the Ringel dual of a block is a power of $2$.

\begin{lem}\label{lem:4.6}
	Assume $\hat{B}$ is a block with $|\hat{B}|=ap^k+s$ where $k>0$ and $1\le s<p^k$. Then some diagonal Cartan number of $\hat{B}$ is not a power of 2.
\end{lem}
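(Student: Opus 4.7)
The plan is to pick a specific weight $\mu\in\hat{B}$ whose diagonal Cartan number $[P_{\hat{B}}(\mu):L(\mu)]$ lies strictly between two consecutive powers of $2$. First, embed $\hat{B}$ as a quasi-hereditary quotient of a primitive block $B^+$ of size $(a+1)p^k$ (valid by Section~\ref{sec:intervals}(iii), since $|\hat{B}|<(a+1)p^k$). In $B^+$ the intervals $I_0^{(k)},\dots,I_a^{(k)}$ are all defined, and $\hat{B}$ consists of $I_0^{(k)}\sqcup\dots\sqcup I_{a-1}^{(k)}$ together with the $s$ smallest weights of $I_a^{(k)}$. Set $\mu:=ap^k-s-1$ in $m$-coordinates; since $1\le s<p^k$, both $\mu$ and $\mu+1$ lie in $I_{a-1}^{(k)}\subset\hat{B}$, while $\sigma^{-1}(\mu)=ap^k+s\in I_a^{(k)}\setminus\hat{B}$ and $\sigma^{-1}(\mu+1)=ap^k+s-1\in I_a^{(k)}\cap\hat{B}$.

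Next, by Lemma~\ref{lem:interval-c} the column of $L(\mu)$ in the decomposition matrix of $B^+$ has support in $I_{a-1}^{(k)}\sqcup I_a^{(k)}$; write $n_{a-1}$ and $n_a$ for the numbers of supported rows in each (all entries lie in $\{0,1\}$). Corollary~\ref{cor:4.3} applied in $B^+$ gives $P_{B^+}(\mu)\cong T_{B^+}(\sigma^{-1}\mu)$, so by Remark~\ref{rem:cartan} the total $n_{a-1}+n_a$ is a power of $2$; Proposition~\ref{prop:4.4}(a) applied with $c=a$ in $B^+$ gives $n_a=n_{a-1}$, so $n_{a-1}=2^{t-1}$ for some $t\ge 1$.

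Passing to the quotient $\hat{B}$, the projective $P_{\hat{B}}(\mu)$ keeps only the $\Delta$-quotients indexed by weights in $\hat{B}$, so
\[ [P_{\hat{B}}(\mu):L(\mu)]=\sum_{\nu\in\hat{B}}[\Delta(\nu):L(\mu)]=n_{a-1}+n_a^*, \]
where $n_a^*$ counts column entries in the $s$ smallest weights of $I_a^{(k)}$ and, by Proposition~\ref{prop:4.4}(a), equals the number of column entries in the $s$ \emph{largest} weights of $I_{a-1}^{(k)}$. The sub-diagonal entry $[\Delta(\mu+1):L(\mu)]=1$, supplied by applying \eqref{eqn:1} (with $i$ and $\bar{i}$ swapped as required by the parity of $\mu+1$) at the weight with $m$-coordinate $\mu+1$, sits at the smallest of these last $s$ positions and forces $n_a^*\ge 1$; the diagonal entry $[\Delta(\mu):L(\mu)]=1$ sits in the first $p^k-s$ positions, forcing $n_{a-1}-n_a^*\ge 1$. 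Hence $[P_{\hat{B}}(\mu):L(\mu)]=2^{t-1}+n_a^*$ lies strictly between $2^{t-1}$ and $2^t$, so it is not a power of $2$.

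The main obstacle I anticipate is the bookkeeping around applying Proposition~\ref{prop:4.4}(a) and Corollary~\ref{cor:4.3} with $c=a$ in the enlarged block $B^+$ (these are stated in the paper inside a single block of size $ap^k$), together with justifying that Cartan numbers in the quasi-hereditary quotient $\hat{B}$ are computed by restricting the reciprocity sum $\sum_\nu[\Delta(\nu):L(\mu)]^2$ to $\nu\in\hat{B}$. Once these formalities are granted, the argument reduces to the clean count of where the two guaranteed entries (diagonal and sub-diagonal) fall relative to the dividing line at position $p^k-s$ of $I_{a-1}^{(k)}$.
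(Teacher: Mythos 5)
Your argument is close in spirit to the paper's but takes a genuinely different route in the final step, and it has one real gap.

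\textbf{The different route.} The paper also picks $\mu=\sigma\lambda$ for $\lambda$ the smallest weight outside $\hat B$ (the same $\mu$ as yours, namely with $m$-coordinate $ap^k-s-1$), but it then uses Remark~\ref{rem:4.2}(a) to observe that exactly \emph{one} row ($\lambda$ itself) carrying a nonzero entry is discarded on passing to $\hat B$, so that the diagonal Cartan number in $\hat B$ is $2^t-1$. It then notes that $2^t-1$ is a power of $2$ only when $t=1$, and rules out $t=1$ by Lemma~\ref{lem:3.11}. You instead split the $2^t$ nonzero entries as $n_{a-1}+n_a$ with $n_{a-1}=n_a=2^{t-1}$ via Proposition~\ref{prop:4.4}(a), then pin the surviving count $n_{a-1}+n_a^*$ strictly between $2^{t-1}$ and $2^t$ by locating the diagonal entry $[\Delta(\mu):L(\mu)]=1$ in the first $p^k-s$ positions and the subdiagonal entry $[\Delta(\mu+1):L(\mu)]=1$ in the last $s$ positions of $I_{a-1}^{(k)}$. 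This avoids the explicit appeal to Lemma~\ref{lem:3.11}; it is a valid and somewhat more self-contained variant. (In fact, combined with Remark~\ref{rem:4.2}(a) your two bounds force $n_a^*=2^{t-1}-1$, matching the paper's count exactly.)

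\textbf{The gap.} When $a=p$, your $B^+$ would have $(p+1)p^k$ simple modules, but the interval structure in Definition~\ref{def:3.7} is only defined for $c\in\{0,\dots,p-1\}$, and all of the results you invoke in $B^+$ (Lemma~\ref{lem:interval-c}, Corollary~\ref{cor:4.3}, Proposition~\ref{prop:4.4}) are stated and proved under the standing hypothesis that the ambient block has size $a'p^{k'}$ with $2\le a'\le p$, which $(p+1)p^k$ violates. The paper sidesteps this by taking $\widetilde B=B_{2p^{k+1}}$ when $a=p$, working with the intervals $I_0^{(k+1)},I_1^{(k+1)}$, and $\hat B=I_0^{(k+1)}\sqcup(\text{first }s\text{ weights of }I_1^{(k+1)})$; your argument goes through verbatim in that setting (with $a-1\mapsto 0$, $a\mapsto 1$, $k\mapsto k+1$), so the fix is straightforward, but as written your proof does not cover $a=p$. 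The other concerns you flag (applying Corollary~\ref{cor:4.3} and Proposition~\ref{prop:4.4}(a) with $c=a$ in $B^+$, and restricting the reciprocity sum to $\nu\in\hat B$) are indeed legitimate once $a<p$ is assumed, since the combinatorics in Section~\ref{sec:4} are about a fixed block and its intervals, and quasi-hereditary quotients preserve both $\Delta$-filtration multiplicities and decomposition numbers for the surviving weights.
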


\begin{proof}
	Observe that $B_{ap^k}\subset \hat{B}\subset \wt{B}$ where $\wt{B} = B_{(a+1)p^k}$ if $a<p$, and $\wt{B}= B_{2p^{k+1}}$ otherwise.
	
	Let $\lambda \in \wt{B}$ be the smallest weight which is not in $\hat{B}$. Note that $\lambda\in I_{a}^{(k)}$ (respectively $\lambda\in I_1^{(k+1)}$). Then $\mu:=\sigma\lambda\in I_{a-1}^{(k)}$ (respectively $\in I_0^{(k+1)}$) and hence $\mu\in\hat{B}$. We claim that the diagonal Cartan number $c_{\mu,\mu}=[P(\mu):L(\mu)]$ corresponding to the weight $\mu\in \hat{B}$ is not a power of $2$.
	
	By Remark \ref{rem:4.2}, in the decomposition matrix of $\wt{B}$ in the column corresponding to $L(\mu)$, all entries equal to 1 lie between the rows indexed by $\mu$ and $\lambda$, and the total number of 1s is a power of 2, say $2^t$. It follows that in the decomposition matrix of $\hat{B}$, the number of 1s in the column corresponding to $L(\mu)$ it $2^t-1$, since 
	$\soc\Delta(\lambda)=L(\sigma\lambda)$ and so $[\Delta(\lambda):L(\mu)]=1$.
	
	Assume 
	that all diagonal Cartan numbers of $\hat{B}$ are powers of 2, and so that $2^t-1$ is a power of 2. Thus $t=1$. From Proposition~\ref{prop:4.1}, $[\Delta(\rho):L(\sigma\lambda)]=[T(\lambda):\Delta(\rho)]$ for all $\rho\in\wt{B}$, 
	hence $T(\lambda)$ has precisely two $\Delta$-quotients. By Lemma~\ref{lem:3.11}, $\lambda$ must be the smallest weight in $I_a^{(k)}$ (respectively $I_1^{(k+1)}$). But this implies that $\hat{B}=B_{ap^k}$, a contradiction.
\end{proof}

For the proofs of Propositions~\ref{prop:4.1} and~\ref{prop:4.4}, we present the arguments for a quantum block; for classical blocks it is the same, replacing $\ell$ by $p$. We proceed by induction on $k$, starting with $k=0$.

\bigskip

\subsection{The block $B$ of size $a\le p$}\label{sec:k=0}
We consider the case $k=0$ of Propositions~\ref{prop:4.1} and~\ref{prop:4.4}.

The weights in $B$ are $\{ i, \ell+\bi, 2\ell + i, \ldots, (a-1)\ell +i^*\}$, where $i^*=i$ or $\bi$ depending on the parity of $a$.
The decomposition matrix of $B$ is the top left $a\times a$ submatrix of the following $p\times p$ matrix
\[\begin{bmatrix} 
1 & 0 & 0 & \ldots & 0 \\
1 & 1 & 0 & \ldots & 0 \\
0 & 1 & 1 & \ldots & 0 \\
& \vdots && \vdots & \\
0 & \ldots & 0 & 1 & 1
\end{bmatrix}\]
and the tilting matrix has the same form.  We observe that Propositions~\ref{prop:4.1} and~\ref{prop:4.4} clearly hold when $k=0$, since:
\begin{itemize}
	\item $I_c^{(0)}=\{\ell c + i^*\}$, where $i^*$ depends on the parity of $c$; and
	\item for $c\ge 1$, we have that $\sigma(\ell c + i^*)=\ell(c-1)+\bistar$.
\end{itemize}

\bigskip

\subsection{Proof of Proposition~\ref{prop:4.1}}\label{sec:proof4.1}
We now present the inductive step of the proof of Proposition~\ref{prop:4.1}: let $k\ge 1$ and assume that Proposition~\ref{prop:4.1} is true for all $x$ such that $0\le x<k$. (In fact, we only need the inductive hypothesis for classical blocks.) We want to prove
\[ [\Delta(\rho):L(\sigma\lambda)] = [T(\lambda):\Delta(\rho)] \]
for all $\lambda=\ell m+i^*\in I_c^{(k)}$ with $cp^k\le m<(c+1)p^k$, and all $\rho\in B$.

\begin{proof}
	Suppose $\lambda=\ell m+i^*$ with $m=cp^k+m_0$ and $0\le m_0\le p^k-1$. By Definition~\ref{def:sigma}, we have that
	\[ \sigma\lambda = \ell(cp^k-m_0-1)+\bistar = \ell\bar{\sigma}(m-1)+\bistar. \]
	By \eqref{eqn:5}, $T(\lambda)\cong T(\ell+i^*)\otimes\bar{T}(m-1)^F$, which has a filtration with quotients given by $T(\ell+i^*)\otimes\bar{\Delta}(u)^F$ where $\bar{\Delta}(u)$ runs over the quotients of a $\bar{\Delta}$-filtration of $\bar{T}(m-1)$. 
	By \eqref{eqn:2}, $T(\ell+i^*)\otimes\bar{\Delta}(u)^F$ has $\Delta$-quotients $\Delta\big(\ell(u+1)+i^*\big)$ and $\Delta(\ell u+\bistar)$.
	
	Write $\rho\in B$ as $\rho\in\{\ell(v+1)+i^*,\ell v+\bistar \}$ for some $v$. Then
	\begin{align*}
	[T(\lambda):\Delta(\rho)] &= [\bar{T}(m-1):\bar{\Delta}(u)]\\
	&= [\bar{\Delta}(v):\bar{L}\big(\bar{\sigma}(m-1)\big)]\\
	&= [\bar{\Delta}(v)^F\otimes L(\bistar):\bar{L}\big(\bar{\sigma}(m-1)\big)^F\otimes L(\bistar)]\\
	&= [\Delta(\rho):L(\sigma\lambda)],
	\end{align*}
	where the second equality follows from the inductive hypothesis, and the final equality follows from \eqref{eqn:1} and observing that $L(\sigma\lambda)=\bar{L}\big(\bar{\sigma}(m-1)\big)^F\otimes L(\bistar)$.
\end{proof}

\bigskip

\subsection{Proof of Proposition~\ref{prop:4.4}}\label{sec:proof4.4}
We now present the inductive step of the proof of Proposition~\ref{prop:4.4}: let $k\ge 1$ and assume that Proposition~\ref{prop:4.4} is true for $x$ such that $0\le x<k$. Let $\mu\in I_{c-1}^{(k)}$.

\bigskip

\noindent (a) We want to prove
\[ [\Delta(\rho):L(\sigma\lambda)] = [\Delta(\sigma\rho):L(\sigma\lambda)] \]
for all $\rho\in I_c^{(k)}$ and $\mu\in I_{c-1}^{(k)}$.

\begin{proof}
	We may assume without loss of generality that $\mu=\ell m+i$ where $m=(c-1)p^k+m_0$ and $0\le m_0\le p^k-1$ (the argument is identical if $\mu=\ell m+\bi$). Let $\rho=\ell s+i^*$ where $i^*\in\{i,\bi\}$ and $s=cp^k+s_0$ for some $0\le s_0\le p^k-1$.
	
	First suppose that $\rho$ is the largest weight in $I_c^{(k)}$, that is, $s_0=p^k-1$. Then $\sigma\rho$ is the smallest weight in $I_{c-1}^{(k)}$. Hence 
	\[ [\Delta(\sigma\rho):L(\mu)] = \begin{cases}
	1 & \text{if }\sigma\rho=\mu,\\
	0 & \text{otherwise}.
	\end{cases}1\]
	By Lemma~\ref{lem:3.10}, this is equal to $[\Delta(\rho):L(\mu)]$.
	
	From now on, we may assume $s_0<p^k-1$. We have $L(\mu)\cong \bar{L}(m)^F\otimes L(i)$ by Steinberg's tensor product theorem. By \eqref{eqn:1}, $\Delta(\rho)$ has a filtration with quotients $\bar{\Delta}(s)^F\otimes L(i^*)$ and $\bar{\Delta}(s-1)^F\otimes L(\bistar)$. Hence
	\[ [\Delta(\rho):L(\mu)] = \begin{cases}
	[\bar{\Delta}(s):\bar{L}(m)] & \text{if } i^*=i,\\
	[\bar{\Delta}(s-1):\bar{L}(m)] & \text{if } i^*=\bi.
	\end{cases} \]
	When $i\not\equiv \bi$ (mod $\ell$), this follows from Steinberg and considering the residues modulo $\ell$ of the highest weights involved. If $i\equiv\bi$ (mod $\ell$), then we take $[\bar{\Delta}(s):\bar{L}(m)]$ if $m$ and $s$ have the same parity, and $[\bar{\Delta}(s-1):\bar{L}(m)]$ otherwise, since weights in the same block must have the same parity. A similar remark applies in all cases below where cases arise depending on $i$ or $\bi$, and henceforth we will not distinguish whether $i\equiv\bi$ or $i\not\equiv\bi$ (mod $\ell$).
	
	On the other hand, we have $\sigma\rho=\ell(cp^k-s_0-1)+\bistar$. Observing that $cp^k-s_0-1=\bar{\sigma}(s-1)$ and that $cp^k-s_0-2=\bar{\sigma}(s)$ as $s_0<p^k-1$, we similarly have that
	\[ [\Delta(\sigma\rho):L(\mu)] = \begin{cases}
	[\bar{\Delta}\big(\bar{\sigma}(s-1)\big):\bar{L}(m)] & \text{if } \bistar=i,\\
	[\bar{\Delta}(\bar{\sigma}s):\bar{L}(m)] & \text{if } \bistar=\bi.
	\end{cases} \]
	The inductive hypothesis now directly implies our claim.
\end{proof}

\bigskip

\noindent (b) We want to prove
\[ [T(\sigma\lambda):\Delta(\sigma\rho)] = [\Delta(\rho):L(\lambda)] \]
for all $\lambda,\rho\in I_c^{(k)}$. We may without loss of generality assume that $\lambda=\ell m+i$; the argument for $\lambda=\ell m+\bi$ is identical.

\begin{proof}
	First assume that $\lambda$ is the largest weight in $I_c^{(k)}$. That is, $\lambda=\ell m+i$ where $m=cp^k+p^k-1$. Since $\rho\le\lambda$, we have that
	\[ [\Delta(\rho):L(\lambda)] = \begin{cases}
	1 & \text{if }\rho=\lambda,\\
	0 & \text{otherwise}.
	\end{cases} \]
	By Lemma~\ref{lem:3.9}, $\sigma\lambda$ is the smallest weight in $I_{c-1}^{(k)}$ and $\sigma\lambda\le\sigma\rho$. Thus 
	\[ [T(\sigma\lambda):\Delta(\sigma\rho)] = \begin{cases}
	1 & \text{if }\rho=\lambda,\\
	0 & \text{otherwise}.
	\end{cases} \]
	The claim follows.
	
	From now on, we may assume that $\lambda$ is not the largest weight in $I_c^{(k)}$. That is, $\lambda=\ell m+i$ where $m=cp^k+m_0$ with $0\le m_0\le p^k-2$. Let $\rho=\ell s+i^*$ where $i^*\in\{i,\bi\}$ and $s=cp^k+s_0$ for some $0\le s_0\le p^k-1$.
	
	We have that $L(\lambda)\cong \bar{L}(m)^F\otimes L(i)$, and by a similar argument to part (a) using \eqref{eqn:1},
	\begin{equation}\label{eqn:4.4b}
	[\Delta(\rho):L(\lambda)] = \begin{cases}
	[\bar{\Delta}(s):\bar{L}(m)] & \text{if } i^*=i,\\
	[\bar{\Delta}(s-1):\bar{L}(m)] & \text{if } i^*=\bi.
	\end{cases}
	\end{equation}
	On the other hand, $\sigma\lambda=\ell\bar{\sigma}(m-1)+\bi$, and since $-1\le m_0-1\le p^k-2$ we have that $\bar{\sigma}(m-1)=cp^k-m_0-1$. As $c\ge 1$ and $m_0\le p^k-2$, we have $\bar{\sigma}(m-1)\ge 1$ and so $T(\sigma\lambda)\cong T(\ell+\bi)\otimes\bar{T}(cp^k-m_0-2)^F$ by \eqref{eqn:5}. Furthermore, $m_0\le p^k-2$ implies $\bar{\sigma}(m)=cp^k-m_0-2$, and hence $T(\sigma\lambda)\cong T(\ell+\bi)\otimes\bar{T}(\bar{\sigma}m)^F$. By \eqref{eqn:2}, the $\Delta$-quotients of $T(\sigma\lambda)$ are precisely $\Delta\big(\ell(u+1)+\bi\big)$ and $\Delta(\ell u+i)$ as $\bar{\Delta}(u)$ varies over the $\bar{\Delta}$-quotients of $\bar{T}(\bar{\sigma}m)$. Hence
	\begin{equation}\label{eqn:4.4b2}
	[T(\sigma\lambda):\Delta(\sigma\rho)] = \begin{cases}
	[\bar{T}(\bar{\sigma}m) : \bar{\Delta}\big(\bar{\sigma}(s-1)-1\big)] & \text{if } i^*=i,\\
	[\bar{T}(\bar{\sigma}m) : \bar{\Delta}\big(\bar{\sigma}(s-1)\big)] & \text{if } i^*=\bi.\\
	\end{cases}
	\end{equation}
	If \emph{either} $i^*=i$ but $\rho$ is not maximal (i.e.~$s_0<p^k-1$) so that $\bar{\sigma}(s)=cp^k-s_0-2=\bar{\sigma}(s-1)-1$, \emph{or} $i^*=\bi$, then the claim follows from the inductive hypothesis combined with \eqref{eqn:4.4b} and \eqref{eqn:4.4b2}.
	
	Finally, suppose $i^*=i$ and $\rho$ is maximal (i.e.~$s_0=p^k-1$). In particular, $\bar{\sigma}(s)=s$ so $\bar{\Delta}(s)$ is simple by Lemma~\ref{lem:c-socle}. Since $\rho\ne\lambda$, we have $m\ne s$ and so $[\bar{\Delta}(s):\bar{L}(m)]=0$. By \eqref{eqn:4.4b}, this implies $[\Delta(\rho):L(\lambda)]=0$.
	On the other hand, \eqref{eqn:4.4b2} gives $[T(\sigma\lambda):\Delta(\sigma\rho)] = [\bar{T}(\bar{\sigma}m) : \bar{\Delta}(n)]$ where $n:=\bar{\sigma}(s-1)-1=(c-1)p^k-1$. Observe that $\bar{\sigma}(n)=n$, so $\bar{\Delta}(n)$ is simple. However, $\bar{\sigma}(m)\ne n$, so $[\bar{T}(\bar{\sigma}m) : \bar{\Delta}(n)]=0$. Therefore $[\Delta(\rho):L(\lambda)]=[T(\sigma\lambda):\Delta(\sigma\rho)]=0$ as desired.
\end{proof}

\bigskip

\subsection{Relating projectives and tilting modules}

Recall for $\mu\in I_d^{(k)}$ with $0\le d<a-1$ that we have $P(\mu)\cong T(\lambda)$ by Corollary~\ref{cor:4.3}, where $\mu=\sigma(\lambda)$ and $\lambda=I_{d+1}^{(k)}$. In particular, such an indecomposable projective $B$--module $P(\mu)$ has a twisted tensor product factorisation by \eqref{eqn:5}, since $T(\lambda)\cong T(\ell+i)\otimes\bar{T}(m-1)^F$ if $\lambda=\ell m+i$. We now consider $P(\mu)$ where $\mu\in I_{a-1}^{(k)}$.

As usual, we proceed by induction on $k$. We will need to keep track of which block our projective modules come from, since the projective cover of a fixed simple module $L(\mu)$ may change when $L(\mu)$ is viewed as a $B_w$--module for different values of $w$. Thus we will write $P_w(\mu)$ for the projective cover of $L(\mu)$ in the block $B_w$. (Indeed, we may also observe from reciprocity $[P(\mu):\Delta(\lambda)]=[\Delta(\lambda):L(\mu)]$ that the number of $\Delta$-quotients of $P_{w'}(\mu)$ may be larger than that of $P_w(\mu)$ if $w'>w$.)

\begin{defn}\label{def:m-in-x}
	Suppose $\lambda\in I_c^{(k)}$ and $\lambda$ is not the largest weight in this interval. Then $\lambda=\ell m+i^*$ for some $cp^k\le m<(c+1)p^k-1$. Since $m\ne (c+1)p^k-1$, there exists $t\in\{0,1,\dotsc,k-1\}$ such that $m\equiv -1$ (mod $p^t$) but $m\not\equiv -1$ (mod $p^{t+1}$).
	As described in Section~\ref{sec:intervals} (ii), we may write $m=(p^t-1)+p^t(cp^{k-t}+m_1)\in\bar{\Delta}(p^t-1)\otimes {\bar{B}}^{F^t}$ where $\bar{B}$ is a primitive block of size $ap^{k-1-t}$, and $m$ corresponds to $cp^{k-t}+m_1\in \bar{I}_c^{(k-1-t)}$. 
	Let $x=k-1-t$ and note that $x\in\{0,1,\dotsc,k-1\}$. We write 
	\[ m\in \bar{I}_c^{(x)} \]
	to denote that $\lambda$ and $m$ have the above form.
\end{defn}

\begin{lem}\label{lem:4.9}
	Suppose $\lambda\in I_c^{(k)}$ and $\lambda$ is not the largest weight in this interval. Let $\lambda=\ell m+i^*$ with $m\in\bar{I}_c^{(x)}$. If $\rho\in I_c^{(k)}$ is such that $[\Delta(\rho):L(\lambda)]\ne 0$, then $\Delta(\rho)$ occurs as a quotient in a $\Delta$-filtration of
	\[ T(\ell+\bistar)\otimes\bar{P}_{(c+1)p^x}(m)^F. \]
\end{lem}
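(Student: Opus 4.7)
My plan is to directly compute the $\Delta$-quotients of $M:=T(\ell+\bistar)\otimes \bar P_{(c+1)p^x}(m)^F$ and verify that each $\Delta(\rho)$ satisfying the hypothesis appears among them. First, using Definition~\ref{def:m-in-x}, write $m=(p^t-1)+p^t\hat m$; then $\bar\Delta(p^t-1)$ is simple by Theorem~\ref{thm:simple-delta}, and iterating \eqref{eqn:3} gives $\bar\Delta(p^t-1)\otimes\bar\Delta(\hat n)^{F^t}\cong \bar\Delta((p^t-1)+p^t\hat n)$ for any relevant $\hat n$. Combined with the Morita equivalence between the imprimitive block $\bar\Delta(p^t-1)\otimes \bar B^{F^t}$ and its primitive partner $\bar B$ (see Section~\ref{sec:intervals}), this identifies
\[ \bar P_{(c+1)p^x}(m)\cong \bar\Delta(p^t-1)\otimes \bar P_{(c+1)p^x}(\hat m)^{F^t}, \]
in which $\hat m$ sits in the top interval $\bar I_c^{(x)}$ of the primitive block of size $(c+1)p^x$. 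By classical reciprocity together with Lemma~\ref{lem:interval-c}, the $\bar\Delta$-quotients of $\bar P_{(c+1)p^x}(\hat m)$ are exactly $\bar\Delta(\hat s)$ with $\hat s\in \bar I_c^{(x)}$ and $[\bar\Delta(\hat s):\bar L(\hat m)]>0$; applying \eqref{eqn:2} to each factor $T(\ell+\bistar)\otimes\bar\Delta(s)^F$ with $s=(p^t-1)+p^t\hat s$ then equips $M$ with a $\Delta$-filtration whose quotients are $\Delta(\ell(s+1)+\bistar)$ and $\Delta(\ell s+i^*_\lambda)$ for each such $s$.

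Second, I would rephrase the hypothesis $[\Delta(\rho):L(\lambda)]\neq 0$ with $\rho=\ell s+i^*_\rho\in I_c^{(k)}$ classically: by \eqref{eqn:4.4b}, either $i^*_\rho=i^*_\lambda$ with $[\bar\Delta(s):\bar L(m)]>0$, or $i^*_\rho=\bistar$ with $[\bar\Delta(s-1):\bar L(m)]>0$. Letting $n\in\{s,s-1\}$ be the relevant classical weight, in both cases $n$ must lie in the imprimitive block of $m$, so $n=(p^t-1)+p^t\hat n$ for some $\hat n$ in the primitive block of $\hat m$, and the Morita-equivalence identification from the first paragraph reduces the multiplicity to $[\bar\Delta(n):\bar L(m)]=[\bar\Delta(\hat n):\bar L(\hat m)]$.

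Finally, I would check that the quantum range $s\in[cp^k,(c+1)p^k)$ forces $\hat n\in \bar I_c^{(x)}$, completing the argument: such $\hat n$ exhibits $\bar\Delta(n)$ as a $\bar\Delta$-quotient of $\bar P_{(c+1)p^x}(m)$, and hence $\Delta(\rho)$ as a $\Delta$-quotient of $M$ by the first paragraph. Solving $n=(p^t-1)+p^t\hat n$ with $n\in\{s,s-1\}$ puts $\hat n\in [cp^{k-t},(c+1)p^{k-t})$, and the classical weight-to-position formula $\hat n=p\mu'+i^*$ with $i^*\in\{0,\ldots,p-2\}$ (since $\hat n$ lies in a primitive block) then forces $\mu'\in [cp^x,(c+1)p^x)$, i.e., $\hat n\in \bar I_c^{(x)}$. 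The main technical nuisance is Case $i^*_\rho=\bistar$ at the boundary $s=cp^k$: here $s-1=cp^k-1$ lies in a strictly more imprimitive block than $m$, which makes $[\bar\Delta(s-1):\bar L(m)]=0$ automatically so the case is vacuous, and analogous divisibility obstructions dispose of all remaining $s$ for which $s-1$ fails to sit in the imprimitive block of $m$.
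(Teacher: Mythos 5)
Your argument is correct and follows essentially the same route as the paper's: express $[\Delta(\rho):L(\lambda)]$ classically via \eqref{eqn:1} and Steinberg, convert to a $\bar\Delta$-filtration multiplicity by BGG reciprocity, then read off the $\Delta$-quotients of the twisted tensor product via \eqref{eqn:2}. The one place you diverge is in the middle step: you explicitly unwind the Morita equivalence, writing $m=(p^t-1)+p^t\hat m$ and using $\bar P_{(c+1)p^x}(m)\cong\bar\Delta(p^t-1)\otimes\bar P_{(c+1)p^x}(\hat m)^{F^t}$ so you can pass to the primitive block and track $\hat n$ and intervals by hand, whereas the paper simply applies reciprocity directly in the (imprimitive) truncated block of size $(c+1)p^x$ containing $m$ --- getting $[\bar\Delta(s):\bar L(m)]=[\bar P_{(c+1)p^x}(m):\bar\Delta(s)]$ in one stroke and never mentioning $\hat m$, $\hat n$, or $t$. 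Your extra bookkeeping is sound but more work than needed; in particular the ``divisibility obstruction'' cases you raise at the end (where $s$ or $s-1$ falls into a different, more imprimitive block than $m$) are handled automatically in the paper's formulation, since in those cases $[\bar\Delta(s):\bar L(m)]$ or $[\bar\Delta(s-1):\bar L(m)]$ vanishes because the two weights are in distinct classical blocks, so reciprocity makes the corresponding filtration multiplicity zero and there is nothing to check.
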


\begin{proof}
	We remark that $T(\ell+\bistar)\otimes\bar{P}_{(c+1)p^x}(m)^F$ indeed belongs to $\mathscr{F}(\Delta)$, since $\bar{P}_{(c+1)p^x}(m)\in\mathscr{F}(\bar{\Delta})$ and then we may identify the $\Delta$-quotients using \eqref{eqn:2}.
	
	Write $\rho=\ell s+j$ with $j\in\{i,\bi\}=\{i^*,\bistar\}$. Since $\lambda=\ell m+i^*\in I_c^{(k)}$ with $c\ge 1$, then $m\ge 1$. Moreover, $[\Delta(\rho):L(\lambda)]\ne 0$ implies $\rho\ge\lambda$, so $s\ge m\ge 1$.
	Then by \eqref{eqn:1}, $\Delta(\rho)=\Delta(\ell s+j)$ has a filtration with quotients $\bar{\Delta}(s-1)^F\otimes L(\bar{j})$ and $\bar{\Delta}(s)^F\otimes L(j)$. Since $L(\lambda)\cong\bar{L}(m)^F\otimes L(i^*)$, we have that
	\[ [\Delta(\rho):L(\lambda)] = \begin{cases}
	[\bar{\Delta}(s):\bar{L}(m)] & \text{if }j=i^*,\\
	[\bar{\Delta}(s-1):\bar{L}(m)] & \text{if }j=\bistar.
	\end{cases} \]
	By reciprocity, this implies
	\[ [\Delta(\rho):L(\lambda)] = \begin{cases}
	[\bar{P}_{(c+1)p^x}(m): \bar{\Delta}(s)] & \text{if }j=i^*,\\
	[\bar{P}_{(c+1)p^x}(m): \bar{\Delta}(s-1)] & \text{if }j=\bistar.
	\end{cases} \]
	We abbreviate $\bar{P}_{(c+1)p^x}(m)$ to $\bar{P}(m)$.
	If $j=i^*$ then $T(\ell+\bistar)\otimes\bar{P}(m)^F$ has a filtration with one of the quotients isomorphic to $T(\ell+\bistar)\otimes\bar{\Delta}(s)^F$, since $T(\ell+\bistar)\otimes (-)^F$ is an exact functor. Since $s\ge 1$, $T(\ell+\bistar)\otimes\bar{\Delta}(s)^F$ has $\Delta(\ell s+i^*)=\Delta(\rho)$ as a $\Delta$-quotient by \eqref{eqn:2}.
	
	If $j=\bistar$ then $[\Delta(\rho):L(\lambda)]\ne 0$ implies that $T(\ell+\bistar)\otimes\bar{P}(m)^F$ has a filtration with one of the quotients isomorphic to $T(\ell+\bistar)\otimes\bar{\Delta}(s-1)^F$. Also by \eqref{eqn:2}, $T(\ell+\bistar)\otimes\bar{\Delta}(s)^F$ has $\Delta(\ell s+i^*)=\Delta(\rho)$ as a $\Delta$-quotient, which concludes the proof.
\end{proof}

\begin{defn}
	For $\gamma\in I_c^{(k)}$, the indecomposable projective module in the block $B_{(c+1)p^k}$ with highest weight $\gamma$ is denoted by $P_{(c+1)p^k}(\gamma)$.
\end{defn}

\begin{prop}\label{prop:new4.8}
	Suppose $\lambda\in I_c^{(k)}$ and $\lambda$ is not the largest weight in this interval. 
	Let $\lambda=\ell m+i^*$ with $m\in\bar{I}_c^{(x)}$. Then $P_{(c+1)p^k}(\lambda)$ has a twisted tensor product factorisation
	\[ P_{(c+1)p^k}(\lambda) \cong T(\ell+\bistar)\otimes\bar{P}_{(c+1)p^x}(m)^F. \] 
\end{prop}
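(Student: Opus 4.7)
The plan is to exhibit $M := T(\ell+\bistar)\otimes\bar{P}_{(c+1)p^x}(m)^F$ as a quotient of $P := P_{(c+1)p^k}(\lambda)$, and then to match $\Delta$-filtration multiplicities so that the canonical surjection is forced to be an isomorphism.

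That $M$ lies in $\mathscr{F}(\Delta)$ was already noted at the start of the proof of Lemma~\ref{lem:4.9}: applying the exact functor $T(\ell+\bistar)\otimes(-)^F$ to a $\bar{\Delta}$-filtration of $\bar{P}_{(c+1)p^x}(m)$ yields sections of the form $T(\ell+\bistar)\otimes\bar{\Delta}(u)^F$, each carrying a two-step $\Delta$-filtration via \eqref{eqn:2}. Next, Lemma~\ref{lem:3.3} applied with $j=\bistar$ and $\bar{V}=\bar{P}_{(c+1)p^x}(m)$ gives
\[ \Hom_G\bigl(M,\, L(\ell n+i^{**})\bigr) \cong \begin{cases} \Hom_{\bar{G}}\bigl(\bar{P}_{(c+1)p^x}(m),\,\bar{L}(n)\bigr) & \text{if } i^{**}=i^*, \\ 0 & \text{otherwise}, \end{cases} \]
which is one-dimensional precisely when $n=m$ and $i^{**}=i^*$, because $\bar{P}_{(c+1)p^x}(m)$ has simple top $\bar{L}(m)$. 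Hence $\modtop M = L(\lambda)$; in particular $M$ is indecomposable and lies in the block $B_{(c+1)p^k}$, so there is a canonical surjection $\pi:P\twoheadrightarrow M$.

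It remains to match $\Delta$-multiplicities. On the projective side, reciprocity in $B_{(c+1)p^k}$ gives $[P:\Delta(\rho)]=[\Delta(\rho):L(\lambda)]$ for every $\rho$ in the block, and writing $\rho=\ell s+j$ with $j\in\{i,\bi\}$, the calculation carried out in the proof of Lemma~\ref{lem:4.9} shows
\[ [\Delta(\rho):L(\lambda)] = \begin{cases} [\bar{\Delta}(s):\bar{L}(m)] & \text{if } j=i^*, \\ [\bar{\Delta}(s-1):\bar{L}(m)] & \text{if } j=\bistar. \end{cases} \]
On the $M$ side, \eqref{eqn:2} shows that each section $T(\ell+\bistar)\otimes\bar{\Delta}(u)^F$ of the constructed filtration contributes the two $\Delta$-quotients $\Delta(\ell(u+1)+\bistar)$ and $\Delta(\ell u+i^*)$, so that
\[ [M:\Delta(\ell s+i^*)] = [\bar{P}_{(c+1)p^x}(m):\bar{\Delta}(s)], \qquad [M:\Delta(\ell s+\bistar)] = [\bar{P}_{(c+1)p^x}(m):\bar{\Delta}(s-1)]. \]
Classical reciprocity in $B_{(c+1)p^x}$ converts these into $[\bar{\Delta}(s):\bar{L}(m)]$ and $[\bar{\Delta}(s-1):\bar{L}(m)]$ respectively, so $[M:\Delta(\rho)] = [P:\Delta(\rho)]$ for all $\rho$ in the block. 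Summing dimensions of standard modules with these multiplicities then forces $\dim M = \dim P$, whence $\pi$ is an isomorphism.

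The main obstacle is not any single deep step but rather the book-keeping: the formula for $[\Delta(\rho):L(\lambda)]$ is produced by the two-step filtration \eqref{eqn:1} of $\Delta(\rho)$, while the formula for $[M:\Delta(\rho)]$ is produced by the different two-step filtration \eqref{eqn:2} of each section of $M$. These align only because the $\bistar$ chosen in the $T(\ell+\bistar)$ factor is dual to the $i^*$ defining $\lambda$; keeping track of the four combinations ($j\in\{i^*,\bistar\}$ on each side) is what makes Lemma~\ref{lem:3.3} and the identification of $\modtop M$ the crucial pivot of the argument.
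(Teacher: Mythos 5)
Your proof is correct, and it follows the same underlying mechanism as the paper's (establish a surjection $P\twoheadrightarrow M$ by computing $\modtop M=L(\lambda)$ via Lemma~\ref{lem:3.3}, then control the $\Delta$-filtration of $M$), but you have streamlined it in two ways worth noting. First, the paper runs an induction on $k$ with a three-way case split (Steps~1, 2a, 2b depending on whether $\bar{P}_{(c+1)p^x}(m)$ equals $\bar\Delta(m)$ or requires the inductive factorisation); your observation that Lemma~\ref{lem:3.3} identifies $\modtop M$ for an \emph{arbitrary} $\bar G$--module $\bar V$, together with the fact that $\bar{P}_{(c+1)p^x}(m)$ automatically has a $\bar\Delta$-filtration as a projective in the quasi-hereditary quotient, makes the induction and the case split dispensable. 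Second, where the paper (via Lemma~\ref{lem:4.9}) shows each $\Delta(\rho)$ occurring in $P$ also occurs in $M$ and then invokes, tacitly, that all the multiplicities are at most one to conclude, you match the multiplicities $[P:\Delta(\rho)]=[M:\Delta(\rho)]$ exactly and finish with a clean dimension count.

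One point you pass over silently, which is also left implicit in the proof of Lemma~\ref{lem:4.9}: the step \emph{``classical reciprocity in $B_{(c+1)p^x}$ converts $[\bar P_{(c+1)p^x}(m):\bar\Delta(u)]$ into $[\bar\Delta(u):\bar L(m)]$''} requires that whenever $[\bar\Delta(u):\bar L(m)]\neq 0$ the weight $u$ actually lies in the quasi-hereditary quotient $B_{(c+1)p^x}$, otherwise the left-hand side vanishes while the right-hand side need not. This holds because for any $\rho=\ell s+j$ in $B_{(c+1)p^k}$ one has $s<(c+1)p^k$, and the largest weight of $B_{(c+1)p^x}$ together with the gap to the next weight in $m$'s classical block straddles $(c+1)p^k$; you should record this short verification rather than rely on the reader to supply it. With that addition, your proof stands.
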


\begin{proof}
	Note when $k=0$, the interval $I_c^{(0)}$ consists of only one weight, so we may assume $k\ge 1$.
	
	\medskip
	
	\noindent\emph{Step 1.} Suppose $k=1$. Then $\lambda=\ell m+i^*$ where $m=cp+m_0$ and $0\le m_0\le p-2$ as $\lambda$ is not maximal in $I_c^{(k)}$. Also $m\in\bar{I}_c^{(0)}$, so we wish to show that $T(\ell+\bistar) \otimes\bar{P}_{c+1}(m)^F\cong P_{(c+1)p}(\lambda)$.
		
	Observe that $T(\ell+\bistar) \otimes\bar{P}_{c+1}(m)^F=T(\ell+\bistar)\otimes\bar{\Delta}(m)^F$ has simple top $L(\lambda)$ by Lemma~\ref{lem:3.3} (or by \eqref{eqn:2}, since it has $\Delta$-quotients $\Delta\big(\ell(m+1)+\bistar\big)$ and $\Delta(\ell m+i^*)=\Delta(\lambda)$).
	Hence there exists a surjective homomorphism $\pi:P_{(c+1)p}(\lambda)\to T(\ell+\bistar)\otimes\bar{\Delta}(m)^F$. 
	
	If $\rho$ is a weight such that $[P_{(c+1)p}(\lambda):\Delta(\rho)]>0$, then $\rho\ge\lambda$. Moreover, $\lambda\in I_c^{(1)}$ and $\rho\in B_{(c+1)p}=I_0^{(1)}\sqcup\cdots\sqcup I_c^{(1)}$, thus $\rho\in I_c^{(1)}$ also. By reciprocity, $[\Delta(\rho):L(\lambda)]>0$, so then by Lemma~\ref{lem:4.9} we find that $\Delta(\rho)$ is also a $\Delta$-quotient of $T(\ell+\bistar)\otimes\bar{\Delta}(m)^F$. Thus $\pi$ is an isomorphism, and $T(\ell+\bistar) \otimes\bar{P}_{c+1}(m)^F\cong P_{(c+1)p}(\lambda)$.

	\medskip
	
	\noindent\emph{Step 2.} We now describe the inductive step. Assume $k\ge 2$, and since $\lambda\in I_c^{(k)}$ is not maximal, $\lambda=\ell m+i^*$ where $m=cp^k+m_0$ and $0\le m_0<p^k-1$. Moreover, $m\in\bar{I}_c^{(x)}$ as in Definition~\ref{def:m-in-x}. We split into two cases depending on whether $m$ is maximal.
	
	\medskip
	
	\noindent\emph{Step 2a.} Suppose $m$ is not maximal. (Specifically, $m$ corresponds to $cp^{k-t}+m_1\in \bar{I}_c^{(k-1-t)}=\bar{I}_c^{(x)}$ and $cp^{k-t}+m_1$ is not the maximal weight of this interval.) Then we may apply the inductive hypothesis.
	By a similar argument to that in Step 1, we deduce from Lemma~\ref{lem:4.9} that 
	\[ T(\ell+\bistar)\otimes\bar{P}_{(c+1)p^x}(m)^F\cong P_{(c+1)p^k}(\lambda). \]
	In particular, the $\Delta$-quotients of $T(\ell+\bistar)\otimes\bar{P}_{(c+1)p^x}(m)^F$ are those of $T(\ell+\bistar)\otimes\bar{\Delta}(u)^F$ as $\bar{\Delta}(u)$ runs over the $\bar{\Delta}$-quotients of $\bar{P}_{(c+1)p^x}(m)$, which are $\Delta\big(\ell(u+1)+\bistar \big)$ and $\Delta(\ell u+i^*)$ by \eqref{eqn:2}.
		
	\medskip
	
	\noindent\emph{Step 2b.} Finally, suppose that $m$ is maximal. Then $\bar{P}_{(c+1)p^x}(m)=\bar{\Delta}(m)$. Observe that $T(\ell+\bistar)\otimes\bar{P}_{(c+1)p^x}(m)^F$ has $\Delta$-quotients $\Delta\big(\ell(m+1)+\bistar \big)$ and $\Delta(\lambda)$, and in particular has simple top $L(\lambda)$. Then $P_{(c+1)p^k}(\lambda)$ surjects onto $T(\ell+\bistar)\otimes\bar{P}_{(c+1)p^x}(m)^F$. By Lemma~\ref{lem:4.9} (since $\lambda$ is not maximal, and there is no restriction on whether $m$ itself is maximal), every $\Delta$-quotient of $P_{(c+1)p^k}(\lambda)$ is also a $\Delta$-quotient of $T(\ell+\bistar)\otimes\bar{P}_{(c+1)p^x}(m)^F$, so in fact $P_{(c+1)p^k}(\lambda)\cong T(\ell+\bistar)\otimes\bar{P}_{(c+1)p^x}(m)^F=T(\ell+\bistar)\otimes\bar{\Delta}(m)^F$. 
\end{proof}

\bigskip


\section{A torsion pair for $B$-$\modcat$}\label{sec:appl}

Fix $B$ a primitive block of size $|B|=ap^k$ where $a\in\{2,3,\dotsc,p\}$ and $k\in\mathbb{N}_0$. As before, the following will be stated for the quantum block; the classical case is similar. In this section, we abbreviate $P(\mu):=P_{ap^k}(\mu)$ and $I_c:=I_c^{(k)}$ when clear from context.

We fix the multiplicity-free projective $B$--module 
\[ G := \bigoplus_{\substack{\mu\in B :\\ P(\mu)\ \text{is tilting}}} P(\mu) \]

Since $P(\mu)$ is a tilting module if and only if it is projective and injective, $G$ is the basic direct sum of all indecomposable projective-injective modules. We define $\cG$, $\cF$, $e$ and $t$ as in Sections~\ref{sec:torsion-prelim} and~\ref{sec:torsion pair}, setting $A=B$. First, we investigate $t(P\mu)$ for $\mu\in B$.

\begin{lem}\label{lem:5.1}
	\begin{itemize}
		\item[(a)] $G = \bigoplus_{\mu\in B\setminus I_{a-1}} P(\mu)$. In particular, $\modtop G = \bigoplus_{\mu\in B\setminus I_{a-1}} L(\mu)$.
		\item[(b)] If $\mu\in I_d$ with $0\le d\le a-2$, then $t(P\mu)=P(\mu)\cong T(\lambda)$ where $\lambda\in I_{d+1}$ and $\mu = \sigma(\lambda)$.
		\item[(c)] Let $\lambda_m$ be the largest weight in $B$. Then $t(P\lambda_m) = L(\sigma\lambda_m)$. This is a tilting module if and only if $a=2$.
		\end{itemize}
\end{lem}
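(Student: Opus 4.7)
The plan is to handle (a), (b), and (c) in sequence, with Corollary~\ref{cor:4.3} as the main tool.

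For (a), any $\mu\in I_d$ with $0\le d\le a-2$ satisfies $\mu=\sigma(\lambda)$ for a unique $\lambda\in I_{d+1}$ by Lemma~\ref{lem:3.9}, and Corollary~\ref{cor:4.3} gives $P(\mu)\cong T(\lambda)$, so $P(\mu)$ is tilting. Conversely, if $\mu\in I_{a-1}$ and $P(\mu)$ were tilting, then comparing simple tops forces $P(\mu)\cong T(\mu)$; but Corollary~\ref{cor:4.3} applied with $\lambda=\mu\in I_{a-1}$ (valid since $a-1\ge 1$) yields $T(\mu)\cong P(\sigma\mu)$ with $\sigma\mu\in I_{a-2}$, contradicting $P(\mu)\cong P(\sigma\mu)$ as distinct simples have non-isomorphic projective covers. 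The claim on $\modtop G$ follows since each $P(\mu)$ has simple top $L(\mu)$. Part (b) is then immediate: $P(\mu)$ is a summand of $G$, hence lies in $\cG$, giving $t(P(\mu))=P(\mu)$, while $P(\mu)\cong T(\lambda)$ for $\mu=\sigma(\lambda)$ is again Corollary~\ref{cor:4.3}.

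For (c), maximality of $\lambda_m$ gives $P(\lambda_m)=\Delta(\lambda_m)$, so one must compute $Ae\Delta(\lambda_m)$. Since each simple $L(\mu)$ has one-dimensional endomorphism algebra, $\dim_K e_\mu M=[M:L(\mu)]$ for every primitive idempotent $e_\mu$. Lemma~\ref{lem:interval-c} restricts composition factors of $\Delta(\lambda_m)$ to $I_{a-2}\sqcup I_{a-1}$, while Lemma~\ref{lem:3.10} (applied to $\rho=\lambda_m$, the largest weight in $I_{a-1}$) identifies $L(\sigma\lambda_m)$ as the unique composition factor with highest weight in $I_{a-2}$, of multiplicity $1$. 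Hence $e\Delta(\lambda_m)=e_{\sigma\lambda_m}\Delta(\lambda_m)$ is one-dimensional. Since $\soc\Delta(\lambda_m)=L(\sigma\lambda_m)$ by Lemma~\ref{lem:q-socle} and this is the sole occurrence of $L(\sigma\lambda_m)$ in $\Delta(\lambda_m)$, applying $e_{\sigma\lambda_m}$ along a composition series chosen with the socle as the bottom step forces $e_{\sigma\lambda_m}\Delta(\lambda_m)\subseteq\soc\Delta(\lambda_m)$. Consequently $Ae\Delta(\lambda_m)=L(\sigma\lambda_m)$, since $L(\sigma\lambda_m)$ is simple and contains the nonzero submodule $Ae\Delta(\lambda_m)$.

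For the final assertion, $L(\sigma\lambda_m)$ is tilting iff $L(\sigma\lambda_m)\cong T(\sigma\lambda_m)$ (matching simple tops). If $a\ge 3$, then $\sigma\lambda_m\in I_{a-2}$ with $a-2\ge 1$, so Lemma~\ref{lem:3.11} gives $T(\sigma\lambda_m)$ at least two $\Delta$-quotients and hence $T(\sigma\lambda_m)$ is not simple. If $a=2$, then $\sigma\lambda_m$ is the minimum of $I_0$ by Lemma~\ref{lem:3.9}, hence the minimum of $B$, and so $\Delta(\sigma\lambda_m)=\nabla(\sigma\lambda_m)=L(\sigma\lambda_m)$ forces $T(\sigma\lambda_m)=L(\sigma\lambda_m)$. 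The main technical obstacle is the socle identification in (c): bridging the multiplicity/dimension count for $e\Delta(\lambda_m)$ to the module-theoretic conclusion $Ae\Delta(\lambda_m)=L(\sigma\lambda_m)$ via the composition-series argument.
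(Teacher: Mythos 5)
Your parts (b) and (c) are essentially sound, and your fleshed‐out argument for (c) (the dimension count $\dim e_\mu M=[M:L(\mu)]$, exactness of $e_{\sigma\lambda_m}(-)$, and the comparison with $\soc\Delta(\lambda_m)$) is a reasonable expansion of the paper's terse ``follows from Lemmas \ref{lem:3.10} and \ref{lem:interval-c}.'' Your treatment of ``tilting iff $a=2$'' via Lemma \ref{lem:3.11} for $a\ge 3$ and simplicity of $\Delta(\sigma\lambda_m)$ for $a=2$ is a valid alternative to the paper's invocation of Theorem \ref{thm:simple-delta}.

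However, there is a genuine gap in your converse direction of (a). You assert that if $\mu\in I_{a-1}$ and $P(\mu)$ is tilting, then ``comparing simple tops forces $P(\mu)\cong T(\mu)$.'' This step is false. Indecomposable tilting modules are indexed by their \emph{highest weight}, and for $\nu\in I_c^{(k)}$ with $c\ge 1$ the simple top of $T(\nu)$ is $\modtop T(\nu)=\soc\Delta(\nu)=L(\sigma\nu)$, \emph{not} $L(\nu)$. So matching tops of $P(\mu)$ and a putative $T(\nu)$ yields $\mu=\sigma\nu$, not $\nu=\mu$; in particular $P(\mu)\not\cong T(\mu)$ in general, since they have different simple tops. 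The correct argument along these lines is: if $P(\mu)\cong T(\nu)$, then by Lemma \ref{lem:interval-c} and reciprocity every $\Delta$-quotient of $P(\mu)$ has weight in $I_{a-1}$, so the highest weight $\nu$ of $T(\nu)$ lies in $I_{a-1}$; then $\modtop T(\nu)=L(\sigma\nu)$ with $\sigma\nu\in I_{a-2}$, which cannot equal $\modtop P(\mu)=L(\mu)$ with $\mu\in I_{a-1}$. (The paper instead argues that the only available index is $\nu\in I_0$, which clashes with the ordering of weights.) Either way, your specific intermediate claim $P(\mu)\cong T(\mu)$ needs to be replaced; as written, the subsequent application of Corollary \ref{cor:4.3} to $\lambda=\mu$ is applied to a module that is not even a summand of $P$.
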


\begin{proof}
	\begin{itemize}
		\item[(a)] From Corollary~\ref{cor:4.3}, if $\mu\in B\setminus I_{a-1}$ then $P(\mu)\cong T(\lambda)$ where $\mu=\sigma\lambda$. Thus if $\mu\in I_{a-1}$ and $P(\mu)$ is a tilting module, then $P(\mu)\cong T(\nu)$ for some $\nu\in I_0$, but this is impossible given the ordering on the weights. Therefore $P(\mu)$ is tilting if and only if $\mu\in B\setminus I_{a-1}$.
		
		\item[(b)] This follows immediately from (a) and the definition of $t$.
		
		\item[(c)] That $t(P\lambda_m) = L(\sigma\lambda_m)$ follows from Lemmas~\ref{lem:3.10} and~\ref{lem:interval-c}. The simple module $L(\sigma\lambda_m)$ is a tilting module precisely when $\sigma\lambda_m$ is the smallest weight in the block (or equivalently, when $\Delta(\sigma\lambda_m)=L(\sigma\lambda_m)$, which follows from Theorem~\ref{thm:simple-delta}). This is equivalent to $\sigma\lambda_m=i\in I_0^{(k)}$, in other words $a=2$, since $\lambda_m\in I_{a-1}$ and $\sigma\lambda_m\in I_{a-2}$.
	\end{itemize}
\end{proof}

\begin{prop}\label{prop:t-tilting}
Let $\lambda\in I_{1}^{(k)}$ and suppose $\lambda$ is not the largest weight in this interval. Let $P(\lambda)=P_{2p^k}(\lambda)$. Then
\begin{itemize}
	\item[(a)] there exists an injective homomorphism $T(\sigma\lambda) \to P(\lambda)$, and
	\item[(b)] $T(\sigma\lambda) = t(P\lambda)$.
\end{itemize}
\end{prop}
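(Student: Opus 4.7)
The plan is to prove (a) by reducing to a classical analogue via the twisted tensor product factorizations of Proposition~\ref{prop:new4.8} and equation~\eqref{eqn:5}, then proving the classical analogue by induction on $k$. Part (b) then follows from (a) by combining the torsion pair axioms with a composition factor count using the symmetry results of Propositions~\ref{prop:4.1} and~\ref{prop:4.4}.

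For (a), I would first write $\lambda = \ell m + i^*$ with $m = p^k + m_0$, $0 \le m_0 \le p^k - 2$, so that $m \in \bar{I}_1^{(x)}$ for some $x \in \{0, 1, \ldots, k-1\}$ by Definition~\ref{def:m-in-x}. Proposition~\ref{prop:new4.8} then gives $P(\lambda) \cong T(\ell+\bistar) \otimes \bar{P}_{2p^x}(m)^F$, while $\sigma\lambda = \ell(p^k - m_0 - 1) + \bistar$ with $p^k - m_0 - 1 \ge 1$, so that equation~\eqref{eqn:5} gives $T(\sigma\lambda) \cong T(\ell+\bistar) \otimes \bar{T}(\bar\sigma m)^F$. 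Since $T(\ell+\bistar) \otimes (-)^F$ is exact, it suffices to produce an embedding $\bar{T}(\bar\sigma m) \hookrightarrow \bar{P}_{2p^x}(m)$ in the classical block $\bar{B}_{2p^x}$, which is the classical analogue of (a) for the smaller parameter $x < k$. This I would prove by induction on $k$: the base case corresponds to $x = 0$, where $\bar{B}_2$ has exactly two weights, $\bar{P}_2(m) = \bar{T}(m)$, and its simple socle $\bar{L}(\bar\sigma m) = \bar{T}(\bar\sigma m)$ provides the required embedding directly. For the inductive step, either $m$ is not maximal in $\bar{I}_1^{(x)}$ (so Proposition~\ref{prop:new4.8} applies and we iterate), or $m$ is maximal there (so $\bar{\Delta}(m)$ is simple by Theorem~\ref{thm:simple-delta}, and the embedding is verified directly using Lemma~\ref{lem:c-socle}).

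For (b), since $\modtop T(\sigma\lambda) = L(\sigma\lambda)$ with $\sigma\lambda \in I_0^{(k)} = B \setminus I_{a-1}^{(k)}$, Lemma~\ref{lem:5.1}(a) gives $T(\sigma\lambda) \in \cG$, so the embedded image from (a) satisfies $T(\sigma\lambda) \subseteq t(P(\lambda))$. For the reverse inclusion, I would show $P(\lambda)/T(\sigma\lambda) \in \cF$, which amounts to showing $[T(\sigma\lambda) : L(\nu)] = [P(\lambda) : L(\nu)]$ for every $\nu \in I_0^{(k)}$. By reciprocity, $[P(\lambda) : L(\nu)] = \sum_{\rho} [\Delta(\rho) : L(\lambda)] \cdot [\Delta(\rho) : L(\nu)]$, and the sum reduces to $\rho \in I_1^{(k)}$ because $\lambda \in I_1^{(k)}$ cannot appear as a composition factor of $\Delta(\rho)$ for $\rho \in I_0^{(k)}$. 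Using the $\Delta$-filtration of $T(\sigma\lambda)$ together with Proposition~\ref{prop:4.4}(b), we have $[T(\sigma\lambda) : \Delta(\sigma\rho)] = [\Delta(\rho) : L(\lambda)]$ for $\rho \in I_1^{(k)}$; combining with Proposition~\ref{prop:4.4}(a), which gives $[\Delta(\sigma\rho) : L(\nu)] = [\Delta(\rho) : L(\nu)]$ for $\nu \in I_0^{(k)}$, we obtain exactly the same sum $\sum_{\rho \in I_1^{(k)}} [\Delta(\rho) : L(\lambda)] \cdot [\Delta(\rho) : L(\nu)]$ for $[T(\sigma\lambda) : L(\nu)]$. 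The uniqueness of the torsion--torsion-free decomposition (Proposition~\ref{prop:ses}) then yields $T(\sigma\lambda) = t(P(\lambda))$.

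The main obstacle will be the inductive step of (a), in particular the boundary case where $m$ is maximal in $\bar{I}_1^{(x)}$: Proposition~\ref{prop:new4.8} does not apply there, so one must handle this case separately using the simplicity of $\bar{\Delta}(m)$ and an explicit socle calculation. Some careful bookkeeping is also required to ensure that the layered Frobenius twists and the identifications of $\bar{T}(\bar\sigma m)$ within the smaller blocks remain consistent throughout the recursion.
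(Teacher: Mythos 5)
Your proposal follows essentially the same approach as the paper: reducing part (a) to the classical embedding $\bar{T}(\bar\sigma m)\hookrightarrow \bar{P}_{2p^x}(m)$ via Proposition~\ref{prop:new4.8} and \eqref{eqn:5} and the exactness of $T(\ell+\bistar)\otimes(-)^F$, then deducing (b) by matching composition factor multiplicities over $I_0^{(k)}$ using Proposition~\ref{prop:4.4} and reciprocity. One small slip: in the base case you write $\bar{P}_2(m)=\bar{T}(m)$, but $m$ is the \emph{highest} weight of $\bar{B}_2$, so $\bar{P}_2(m)=\bar{\Delta}(m)$ (uniserial with socle $\bar{L}(\bar\sigma m)$), whereas $\bar{T}(m)$ has the extra top layer $\bar L(\bar\sigma m)$ and is not projective there; since both modules have socle $\bar{L}(\bar\sigma m)$, the embedding you extract is nevertheless correct and the rest of the argument is unaffected.
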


\begin{proof}
	Let $\lambda = \ell m + i^*$ where $m=p^k+m_0$ and $0\leq m_0 < p^k-1$. By Proposition \ref{prop:new4.8} we have the factorisation
	\begin{equation}\label{eqn:star}
	P(\lambda) \cong T(\ell + \bar{i^*})\otimes \bar{P}_{2p^x}(m)^F
	\end{equation}
	where $m\in\bar{I}_c^{(x)}$ as in Definition~\ref{def:m-in-x}. Since $0\le m_0\le p^k-2$, we have that $\bar{\sigma}(m)+1=\bar{\sigma}(m-1)$ (see Definition~\ref{def:sigma} (a)).
	
	\begin{itemize}
		\item[(a)] We use induction on $k$, starting with $k=1$ since $\lambda$ is not the largest weight in its interval. Then $m=p+m_0$ where $0\leq m_0\leq p-2$,
		$x=k-1=0$, and $\bar{P}_2(m) = \bar{\Delta}(m)$. By Lemma~\ref{lem:5.1} (c), we have an exact sequence
		\[ 0\to \bar{L}(\bar{\sigma} m) =  \bar{T}(\bar{\sigma} m) \to \bar{P}_2(m). \]
		Applying the (exact) functor $T(\ell+\bistar)\otimes(-)^F$, we obtain
		\[ 0 \to T(\ell + \bar{i^*}) \otimes \bar{T}(\bar{\sigma} m) ^F \to P_{2p}(\lambda). \]
		Now $T(\ell + \bar{i^*})\otimes \bar{T}(\bar{\sigma}m)^F \cong T(\ell(\bar{\sigma}m + 1) + \bar{i^*})$ by \eqref{eqn:5}, but $\sigma\lambda = \ell \bar{\sigma}(m-1) + \bar{i^*}=\ell(\bar\sigma m+1)+\bistar$, so the proof for $k=1$ is concluded.
		
		For the inductive step, with notation as in \eqref{eqn:star}, if $m$ is maximal in $\bar{I}_c^{(x)}$ then we proceed exactly as in the case $k=1$. If $m$ is not maximal, then the inductive hypothesis gives an injective homomorphism $\bar{T}(\bar{\sigma}m)\to \bar{P}_{2p^x}(m)$. As above, we obtain an inclusion of $T(\ell(\bar\sigma m+1)+\bistar)\cong T(\ell+\bistar)\otimes\bar{T}(\bar\sigma m)^F$ into $P_{2p^k}(\lambda)$, so the proof of part (a) is concluded since $\sigma\lambda=\ell(\bar\sigma(m-1))+\bistar$.
		
		\item[(b)] By part (a) and since $T(\sigma\lambda)$ has only composition factors with highest weights in $I_0^{(k)}$, we may identify $T(\sigma\lambda)$ with a submodule of $t(P\lambda)$, since $G=\bigoplus_{\mu\in I_0}P(\mu)$ by Lemma~\ref{lem:5.1}. Let $\mu\in I_0$. We will show that 
		\begin{equation}\label{eqn:5.2}
		[T(\sigma\lambda) : L(\mu)] = [P(\lambda) : L(\mu)].
		\end{equation}
		Assuming \eqref{eqn:5.2}, it follows that the factor module $P(\lambda)/T(\sigma\lambda)$ has only composition factors
		with highest weight in $I_1^{(k)}$. Therefore
		$Be(P(\lambda)/T(\sigma\lambda))=0$, that is, $t(P\lambda) = BeP(\lambda)$ is contained in $BeT(\sigma\lambda) \subseteq  T(\sigma\lambda)$, and so $T(\sigma\lambda)=t(P\lambda)$ as claimed.
		
		To show that \eqref{eqn:5.2} holds, we note that $\sigma:I_1^{(k)}\to I_0^{(k)}$ is a bijection, and so
		\begin{align*}
		[T(\sigma\lambda):L(\mu)] 
		&= \sum_{\rho\in I_1} [T(\sigma\lambda):\Delta(\sigma\rho)]\cdot[\Delta(\sigma\rho):L(\mu)]\\		
		&= \sum_{\rho\in I_1} [\Delta(\rho):L(\lambda)]\cdot[\Delta(\rho):L(\mu)]\\
		&= \sum_{\rho\in I_1} [P(\lambda):\Delta(\rho)]\cdot[\Delta(\rho):L(\mu)] = [P(\lambda):L(\mu)],
		\end{align*}
		where the second equality follows from Proposition~\ref{prop:4.4} and the third by reciprocity.
	\end{itemize}
\end{proof}

\bigskip

\subsection{Ringel self-duality}\label{sec:ringel-self-duality}

From now on, we focus on blocks $B$ of Schur algebras containing $2p^k$ simple modules. 
We choose an orthogonal primitive idempotent decomposition of the identity in $B$, and for each $\lambda\in B$, we take from this a primitive idempotent $e_{\lambda}$ such that $P(\lambda)=Be_{\lambda}$.
Then let $G=Be$ where $e = \sum_{\mu\in I_0} e_\mu$. In preparation for applying Theorem~\ref{thm:2.4} with our choice of $e$ and $t$, we have the following result.

\begin{lem}\label{lem:5.2} 
	Let $a=2$ and $\lambda\in B$. Then $Be\soc P(\lambda)=\soc P(\lambda)$.
\end{lem}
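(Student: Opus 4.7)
The goal is to show that $\soc P(\lambda)$ contains only composition factors $L(\mu)$ with $\mu \in I_0^{(k)}$; this is equivalent to the stated identity, since by Lemma~\ref{lem:5.1}(a) the idempotent $e$ acts as the identity on $L(\mu)$ precisely when $\mu \in I_0^{(k)}$, while annihilating all other simples. Two cases are immediate: if $\lambda \in I_0^{(k)}$, then $P(\lambda) \cong T(\sigma^{-1}\lambda)$ by Lemma~\ref{lem:5.1}(b), whose socle is simple and equal to $L(\lambda) \in I_0^{(k)}$; and if $\lambda$ is the maximal weight of $B$, then $P(\lambda) = \Delta(\lambda)$, whose socle $L(\sigma\lambda)$ is the minimal weight of $B$ by Lemma~\ref{lem:q-socle}, hence lies in $I_0^{(k)}$.

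The remaining case is $\lambda \in I_1^{(k)}$ not maximal, which I handle by induction on $k$; the base case $k=0$ is covered above, since $I_1^{(0)}$ contains only the maximal weight of $B$. For the inductive step, write $\lambda = \ell m + i^*$ with $m \in \bar I_1^{(x)}$ in the sense of Definition~\ref{def:m-in-x}; Proposition~\ref{prop:new4.8} provides the twisted tensor product factorisation
\[ P(\lambda) \cong T(\ell + \bistar) \otimes \bar P_{2p^x}(m)^F, \qquad x \le k-1. \]
Applying the contravariant duality $(-)^\circ$, which fixes simples and tilting modules, interchanges projectives and injectives in the classical block $\bar B$, and commutes with Frobenius twist and tensor product, yields $Q(\lambda) \cong T(\ell + \bistar) \otimes \bar Q_{2p^x}(m)^F$. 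Lemma~\ref{lem:3.3} directly computes the top of this twisted tensor product as $\modtop Q(\lambda) \cong L(i^*) \otimes (\modtop \bar Q_{2p^x}(m))^F$, and dualising once more gives
\[ \soc P(\lambda) \cong L(i^*) \otimes (\soc \bar P_{2p^x}(m))^F. \]

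By the inductive hypothesis, applied to the classical primitive block $\bar B$ of size $2p^x$ with $x<k$, every composition factor $\bar L(\bar\mu)$ of $\soc \bar P_{2p^x}(m)$ has $\bar\mu \in \bar I_0^{(x)}$. Steinberg's tensor product theorem then presents each composition factor of $\soc P(\lambda)$ as $L(i^*) \otimes \bar L(\bar\mu)^F \cong L(\ell\bar\mu + i^*)$. Any $\bar\mu \in \bar I_0^{(x)}$ satisfies $\bar\mu \le (p^x-1)p + (p-2) = p^{x+1} - 2 \le p^k - 1$ (since $x+1 \le k$), so $\ell\bar\mu + i^*$ lies in $I_0^{(k)}$, as required. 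The main technical step is the duality manoeuvre in the previous paragraph: socles of twisted tensor products are not in general well-behaved, but passing through $(-)^\circ$ converts the socle computation into a top computation that Lemma~\ref{lem:3.3} handles directly; once this identification is available, the induction and the bookkeeping on intervals proceed cleanly.
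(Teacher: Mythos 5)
Your proof is correct in its overall structure and takes a genuinely different route in the key intermediate case from what the paper does. For $\lambda\in I_0^{(k)}$ and for $\lambda$ maximal, you match the paper. For $\lambda\in I_1^{(k)}$ not maximal, the paper invokes Proposition~\ref{prop:t-tilting}(a) to obtain an embedding $T(\sigma\lambda)\hookrightarrow P(\lambda)$ and then argues that $\soc P(\lambda)=\soc T(\sigma\lambda)$ (both simple, the latter with all composition factors in $I_0$). You instead compute $\soc P(\lambda)$ directly via the duality manoeuvre through Lemma~\ref{lem:3.3}, obtaining $\soc P(\lambda)\cong L(i^*)\otimes(\soc\bar P_{2p^x}(m))^F$, and then close by induction on $k$ with a weight bound. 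This avoids any reliance on Proposition~\ref{prop:t-tilting}, which is a real simplification of the logical dependency. Both proofs implicitly rest on the compatibility of $(-)^\circ$ with the twisted tensor product and Frobenius twist; you state this explicitly, the paper leaves it tacit.

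There is one imprecision in your last paragraph that you should tidy up. When $m\in\bar I_1^{(x)}$ in the sense of Definition~\ref{def:m-in-x} with $t>0$, the module $\bar P_{2p^x}(m)$ lives in the imprimitive classical block $\bar\Delta(p^t-1)\otimes\bar B^{F^t}$, not in the primitive block $\bar B$ itself. So the inductive hypothesis is applied to $\bar B$ and transported across the Morita equivalence of Section~\ref{sec:intervals}(ii)(b), under which $\bar L(\bar\nu)$ for $\bar\nu\in\bar B$ corresponds to $\bar L(p^t-1+p^t\bar\nu)$. Consequently, the composition factors $\bar L(\bar\mu)$ of $\soc\bar P_{2p^x}(m)$ have $\bar\mu = p^t-1+p^t\bar\nu$ with $\bar\nu\in\bar I_0^{(x)}$ of $\bar B$, so $\bar\mu\le p^t-1+p^t(p^{x+1}-2)=p^k-p^t-1$, not $\bar\mu\le p^{x+1}-2$ as you wrote (your bound is correct only when $t=0$). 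The corrected bound still satisfies $\bar\mu<p^k$, which is all you need for $\ell\bar\mu+i^*$ to lie in $I_0^{(k)}$, so your conclusion is unaffected; but as stated the inequality is off for $t>0$ and the passage through the Morita equivalence should be made explicit.
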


\begin{proof}
	If $\lambda \in I_0$ then $P(\lambda)$ is a tilting module by Corollary~\ref{cor:4.3}. Hence $P(\lambda)$ is a self-dual module with socle $L(\lambda)$. The claim follows since $BeL(\lambda) = L(\lambda)$.
	Now suppose $\lambda\in I_1$ and $\lambda$ is not maximal in $I_1$. By Lemma~\ref{lem:3.3} and the factorisation in Proposition~\ref{prop:new4.8}, $\soc P(\lambda)$ is simple, and so $\soc P(\lambda)=\soc T(\sigma\lambda)$ by Proposition~\ref{prop:t-tilting} (a). But all composition factors of $T(\sigma\lambda)$ have highest weight in $I_0$, so $Be\soc P(\lambda)=\soc P(\lambda)$.
	Finally if $\lambda$ is the largest weight in $I_1$, then $\soc P(\lambda)=\soc \Delta(\lambda)=L(\sigma\lambda)$. The claim follows since $\sigma\lambda\in I_0$.
\end{proof}

Lemma~\ref{lem:5.2} shows that blocks $B$ with $|B|=2p^k$ satisfy the conditions of Theorem~\ref{thm:2.4}, and so we have an injective algebra homomorphism 
\begin{equation}\label{eq:inj}
	t: \End_B(P) \longrightarrow \End_B(t(P))
\end{equation}
where $P=\bigoplus_{\lambda\in B}P(\lambda)$.
Observe that $\End_B(P)$ is the basic algebra of $B$, which in particular is Morita equivalent to $B$, and moreover as quasi-hereditary algebras by \cite{Coul}. 

\begin{thm}
	If $a=2$, then $t(P)=\bigoplus_{\lambda\in B} T(\lambda)$. Moreover, the map $t$ in \eqref{eq:inj} is an isomorphism, and $B$ is Ringel self-dual.
\end{thm}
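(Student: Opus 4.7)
The plan is to identify $t(P)$ explicitly as a basic full tilting module, then use the combinatorial symmetries of Section~\ref{sec:4} to match the dimension of $\End_B(t(P))$ with that of $\End_B(P)$; combined with the injectivity of $t$, this gives the isomorphism, from which Ringel self-duality follows. For the first part I would assemble the summand-by-summand picture: by Lemma~\ref{lem:5.1}(b), for each $\mu \in I_0$, $t(P(\mu)) = P(\mu) \cong T(\sigma^{-1}\mu)$ with $\sigma^{-1}\mu \in I_1$; by Proposition~\ref{prop:t-tilting}(b), for each non-maximal $\lambda \in I_1$, $t(P(\lambda)) = T(\sigma\lambda)$; and by Lemma~\ref{lem:5.1}(c), for the largest weight $\lambda_m \in I_1$, $t(P(\lambda_m)) = L(\sigma\lambda_m) = T(\sigma\lambda_m)$ since $\sigma\lambda_m$ is the smallest weight of $B = I_0 \sqcup I_1$ when $a=2$. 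Letting $\tau \colon B \to B$ be the involution that is $\sigma$ on $I_1$ and $\sigma^{-1}$ on $I_0$, one obtains $t(P) = \bigoplus_{\lambda \in B} T(\tau\lambda) = \bigoplus_{\nu \in B} T(\nu)$, so $\End_B(t(P))$ is, by construction, the basic algebra of a Ringel dual of $B$.

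To promote the injection $t$ in \eqref{eq:inj} (injective by Lemma~\ref{lem:5.2} and Theorem~\ref{thm:2.4}) to an isomorphism, I would compare dimensions. Reciprocity $[P(\mu):\Delta(\nu)] = [\nabla(\nu):L(\mu)]$ and $\dim\nabla(\nu) = \dim\Delta(\nu)$ (from the duality $(-)^\circ$) give
\[ \dim \End_B(P) = \sum_\nu d_\nu^2, \qquad d_\nu := \sum_\lambda [\Delta(\nu):L(\lambda)], \]
while the standard Ext-vanishing identity $\dim \Hom_B(X,Y) = \sum_\nu [X:\Delta(\nu)][Y:\nabla(\nu)]$ for $X \in \mathscr{F}(\Delta)$, $Y \in \mathscr{F}(\nabla)$, together with $T(\mu)^\circ \cong T(\mu)$, give
\[ \dim \End_B(T) = \sum_\nu N_\nu^2, \qquad N_\nu := \sum_\mu [T(\mu):\Delta(\nu)]. \]
The key combinatorial claim is $N_\nu = d_{\tau\nu}$ for every $\nu \in B$. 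For $\nu \in I_1$, contributions from $\mu \in I_0$ vanish by triangularity, and Proposition~\ref{prop:4.1} followed by Proposition~\ref{prop:4.4}(a) (with Lemma~\ref{lem:interval-c} to see that $\Delta(\sigma\nu)$ has composition factors only in $I_0$) converts $N_\nu$ into $d_{\sigma\nu}$. For $\nu \in I_0$, the terms with $\mu \in I_1$ again sum to $d_\nu$, while Proposition~\ref{prop:4.4}(b) (taking $\sigma\lambda' = \mu \in I_0$ and $\sigma\rho' = \nu \in I_0$) rewrites $\sum_{\mu \in I_0}[T(\mu):\Delta(\nu)]$ as $\sum_{\tilde\mu \in I_1}[\Delta(\sigma^{-1}\nu):L(\tilde\mu)]$; combining this with the $I_0$-part of $d_{\sigma^{-1}\nu}$, which equals $d_\nu$ by Proposition~\ref{prop:4.4}(a), reassembles the full row sum $d_{\sigma^{-1}\nu}$. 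Since $\tau$ is a bijection, $\sum_\nu N_\nu^2 = \sum_\nu d_\nu^2$, so $t$ is an algebra isomorphism.

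For the final conclusion, $\End_B(P)$ is the basic algebra of $B$, hence Morita equivalent to $B$, and $\End_B(t(P))$ is, by the first paragraph, the basic algebra of a Ringel dual $B'$ of $B$. The isomorphism $t$ therefore identifies these two basic algebras, so $B$ is Morita equivalent to $B'$, i.e., $B$ is Ringel self-dual. The step I expect to require most care is the identity $N_\nu = d_{\tau\nu}$ for $\nu \in I_0$: there one must split the sum over $\mu$ into $I_0$ and $I_1$ parts, invoke the appropriate half of Proposition~\ref{prop:4.4} on each, and verify that the two contributions combine precisely to $d_{\sigma^{-1}\nu}$ rather than to some near-miss expression.
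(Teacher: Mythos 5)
Your argument is correct, and it reaches the same conclusion by a slightly different dimension count from the one the paper uses. The identification of $t(P)$ with the basic full tilting module is the same as the paper's (Lemma~\ref{lem:5.1}(b),(c) and Proposition~\ref{prop:t-tilting}(b)), and the paper then proves the isomorphism by checking, pair by pair, that $\dim\Hom_B(P\lambda,P\mu) = \dim\Hom_B(t(P\lambda),t(P\mu))$ for all $\lambda,\mu\in B$, splitting into the four cases $\lambda,\mu\in I_0$ or $I_1$ and invoking \eqref{eqn:5.2}, Proposition~\ref{prop:4.4}(b) and reciprocity in each. You instead compare the \emph{total} dimensions by the aggregate formulas $\dim\End_B(P) = \sum_\nu d_\nu^2$ and $\dim\End_B(T) = \sum_\nu N_\nu^2$, and reduce the whole isomorphism to the single combinatorial identity $N_\nu = d_{\tau\nu}$, which you then verify by exactly the same inputs (Propositions~\ref{prop:4.1}, \ref{prop:4.4} and Lemma~\ref{lem:interval-c}). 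What your formulation buys is a more uniform and conceptual bookkeeping: one identity about row/column sums rather than four case-by-case Hom computations, and the role of the involution $\tau$ as a permutation intertwining the tilting and decomposition matrices is made explicit. What the paper's componentwise computation buys is the (slightly stronger, though unused) fact that each component map $t_{\lambda\mu}$ is individually an isomorphism. Both routes give Ringel self-duality as an algebra; to upgrade to self-duality as \emph{quasi-hereditary} algebras the paper invokes Coulembier's uniqueness theorem just before the statement, which you should also cite if you want the full strength of the result.
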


\begin{proof}
	Let $\lambda_m$ denote the largest weight in $B$. By Lemma~\ref{lem:5.1} (c), we have that $t(P(\lambda_m))=T(i)$ where $i$ is the smallest weight in $B$. 
	For $\lambda\in I_1$ and $\lambda\ne\lambda_m$, we have that $t(P(\lambda))\cong T(\sigma\lambda)$, by Proposition~\ref{prop:t-tilting}. 
	These give all of the indecomposable tilting modules with highest weights in $I_0$, and Corollary~\ref{cor:4.3} shows that $t(P\lambda)$ for $\lambda\in I_0$ give the remaining indecomposable tilting $B$--modules, namely those with highest weights in $I_1$. Hence $t(P)=\bigoplus_{\lambda\in B} T(\lambda)$.
	
	A Ringel dual of $B$ is thus given by $\End_B(t(P))$, so to complete the proof it suffices by \eqref{eq:inj} to show for all $\lambda,\mu\in B$ that we have $\dim\Hom_B(P\lambda,P\mu)=\dim\Hom_B(t(P\lambda),t(P\mu))$, since each component $t_{\lambda\mu}$ is injective (see \eqref{eqn:component}).
	
	This is clear if $\lambda,\mu \in I_0^{(k)}$ since then $P\lambda = t(P\lambda)$ and $P\mu = t(P\mu)$, as $G=\bigoplus_{\gamma\in I_0} P(\gamma)$. 
	If $\lambda\in I_0$ and $\mu\in I_1$, then $P(\lambda)=t(P\lambda)$ and $t(P\mu)=T(\sigma\mu)$ (by Proposition~\ref{prop:t-tilting} if $\mu$ is not maximal in $I_1$, or by Lemma~\ref{lem:5.1} if $\mu$ is maximal). Since $\sigma\mu\in I_0$, $T(\sigma\mu)$ has only composition factors with highest weight in $I_0$. Using that $P(\lambda)$ is projective with top $L(\lambda)$, we see that $\dim\Hom_B(P\lambda,T(\sigma\mu)) = [T(\sigma\mu):L(\lambda)]$, and $\dim\Hom_B(P\lambda,P\mu)=[P(\mu):L(\lambda)]$. By \eqref{eqn:5.2} (with $\lambda$ and $\mu$ interchanged), these two multiplicities are equal. Using duality we get for free the case when $\lambda\in I_1$ and $\mu\in I_0$. Finally, suppose $\lambda,\mu\in I_1$. Recalling that $\sigma:I_1\to I_0$ is a bijection, we have
	\begin{align*}
	\dim\Hom_B(T(\sigma\lambda),T(\sigma\mu)) &= \sum_{\rho \in I_1} [T(\sigma\lambda):\Delta(\sigma\rho)]\cdot [T(\sigma\mu) :\Delta(\sigma\rho)]\\
	&= \sum_{\rho\in I_1} [\Delta\rho:L\lambda] \cdot [\Delta\rho:L\mu]\\
	&= \sum_{\rho\in I_1}[P\lambda : \Delta\rho]\cdot [\Delta\rho: L\mu]\\
	&= [P\lambda : L\mu] = \dim\Hom_B(P\lambda, P\mu)
	\end{align*}
	by Proposition~\ref{prop:4.4} and reciprocity.
\end{proof}

\bigskip


\end{document}